\documentclass[reqno]{amsart}
\usepackage{amssymb,latexsym,amsmath,amsfonts,enumerate}
\usepackage{amsthm}
\usepackage[mathscr]{eucal}
\usepackage{paralist}
\usepackage{rotating}
\usepackage{multirow}
\usepackage[left=1.5in, right=1.5in]{geometry}
\newtheorem{theorem}{Theorem}
\newtheorem{lemma}{Lemma}
\newtheorem{proposition}{Proposition}
\newtheorem{corollary}{Corollary}

\theoremstyle{definition}
\newtheorem{definition}[theorem]{Definition}

\newtheorem{example}[theorem]{Example}
\theoremstyle{remark}
\newtheorem{remark}{Remark}

 \usepackage[colorlinks=true]{hyperref}
 \hypersetup{urlcolor=blue, citecolor=red}
\title[Superlinear Volterra Equations]{Blow--up and Superexponential Growth in Superlinear Volterra Equations}
\author[John A. D. Appleby and Denis D. Patterson]{}
\subjclass[2010]{Primary: 34K12, 34K25.}
\keywords{Volterra equations, blow-up solutions, growth, asymptotics, superlinear}

\email{john.appleby@dcu.ie}
\email{denis.patterson2@mail.dcu.ie}
\begin{document}
		\maketitle
	\centerline{\scshape John A. D. Appleby}
	\medskip
	{\footnotesize
		\centerline{School of Mathematical Sciences, Dublin City University}
		\centerline{Dublin, Ireland}
	} 
	
	\medskip
	
	\centerline{\scshape Denis D. Patterson}
	\medskip
	{\footnotesize
		\centerline{School of Mathematical Sciences, Dublin City University}
		\centerline{Dublin, Ireland}
	}
	
	\bigskip
	
		\begin{abstract}
			This paper concerns the finite--time blow--up and asymptotic behaviour of solutions to nonlinear Volterra integro--differential equations. Our main contribution is to determine sharp estimates on the growth rates of both explosive and nonexplosive solutions for a class of equations with nonsingular kernels under weak hypotheses on the nonlinearity. In this superlinear setting we must be content with estimates of the form $\lim_{t\to\tau}A(x(t),t) = 1$, where $\tau$ is the blow--up time if solutions are explosive or $\tau = \infty$ if solutions are global. Our estimates improve on the sharpness of results in the literature and we also recover well--known blow--up criteria via new methods.
		\end{abstract} 
	\section{Introduction}
	This paper concerns the blow--up and asymptotic behaviour of positive solutions to initial value problems of the form
	\begin{equation}\label{eq.volterra_ch6}
	x'(t) = \int_0^t w(t-s)f(x(s))\,ds, \quad t \geq 0; \quad x(0) > 0.
	\end{equation}
	We assume that the nonlinearity, $f$, obeys
	\begin{equation}\label{eq.f1_ch6}
	f \in C((0,\infty);(0,\infty)), \quad f \mbox{ is asymptotically increasing}, \quad \lim_{x\to\infty}\frac{f(x)}{x}=\infty.
	\end{equation}
	The positivity and monotonicity hypotheses in \eqref{eq.f1_ch6} are natural when studying growing solutions to \eqref{eq.volterra_ch6}. Moreover, $f(x)/x \to \infty$ as $x\to\infty$ is necessary for the existence of a solution to \eqref{eq.volterra_ch6} which blows up in finite--time. Sufficient conditions for the existence and uniqueness of local solutions are readily available \cite{GLS}. It is well--known that the behaviour of the kernel near zero is crucial in the analysis of blow--up problems of the type studied in this paper \cite{brunner2012blow}. Hence we assume that
	\begin{equation}\label{eq.w}
	w(0)>0, \quad w \in C(\mathbb{R}^+;\mathbb{R}^+) .
	\end{equation}
	There is a rich and active literature on blow--up problems in Volterra integral equations (VIEs) (see the survey articles \cite{roberts1998analysis,roberts2007recent} and the recent papers \cite{kirk2013system,ma2011blow}). Much of this interest stems from the connections between VIEs and PDEs of parabolic--type in which the source term has a highly localised spacial dependence \cite{kirk2002blow,mydlarczyk2005blow,olmstead1996explosion}. In this context, a blow--up solution represents the scenario in which the energy entering the system via the source term outweighs the ability of the medium to dissipate this energy and a literal explosion occurs in the physical system . In many cases, the leading order behaviour in such models is governed by a nonlinear VIE of the form
	\begin{equation}\label{eq.volterra_integral}
	x(t) = x(0) + H(t) + \int_0^t W(t-s)f(x(s))\,ds, \quad t \geq 0.
	\end{equation}
	Equation \eqref{eq.volterra_ch6} is a special case of \eqref{eq.volterra_integral}. In particular, if $W \in C^1([0,\infty);[0,\infty))$ with $W(0)=0$ and $H \equiv 0$, differentiation of \eqref{eq.volterra_integral} yields \eqref{eq.volterra_ch6} with $w=W'$. Similarly, integration of \eqref{eq.volterra_ch6} yields \eqref{eq.volterra_integral} with $H \equiv 0$. After analysing the unforced equation \eqref{eq.volterra_ch6}, we later extend our results to the case of nontrivial $H$ (see Section \ref{sec.ext_to_pert}). 
	
	According to the survey of Roberts \cite{roberts1998analysis}, research on blow--up problems of the type discussed above has mainly sought to answer the following questions:
	\begin{enumerate}[(1.)]
		\item Under what conditions do solutions blow--up?
		\item At what time do solutions blow--up?
		\item What is the asymptotic behaviour of solutions at blow--up?
	\end{enumerate}
	Being the most fundamental, $(1.)$ has naturally attracted the most attention and thus blow--up criteria for both general and specialised classes of VIEs are very well understood (see, for example, \cite{malolepszy2008blow}). We revisit $(1.)$ for the Volterra integro--differential equation (VIDE) \eqref{eq.volterra_ch6} and prove necessary and sufficient conditions for finite--time explosion of solutions. However, as we explain in more detail in Section \ref{sec_main_results_blow_up}, our conditions can be recovered from existing general conditions for equation \eqref{eq.volterra_integral} due to Brunner and Yang \cite{brunner2012blow}. We still find it useful to independently prove our own blow--up criteria for \eqref{eq.volterra_ch6} in order to gain preliminary insight into the behaviour of solutions. Moreover, our method of proof is different to that which Brunner and Yang used to tackle the related VIE problem. We do not address $(2.)$ -- estimation of the blow-up time -- in the present work, but this is also a very active area of investigation (see \cite{malolepszy2014blow,malolepszy2010blow} and the references therein) and represents an interesting open problem for general nonlinear VIEs.
	
	Our main contribution is to provide a comprehensive answer to $(3.)$ for equations of the form \eqref{eq.volterra_ch6}, and furthermore to understand the behaviour of nonexplosive solutions. The asymptotic behaviour of blow--up solutions has attracted considerable attention, both for VIEs and PDEs. Moreover, various authors have studied the problem of determining the blow--up rate or profile of solutions (see e.g. \cite{fu2003global,mahmoudi_2017}). Roberts \cite{roberts1997characterizing}, and Olmstead and Roberts \cite{roberts1996growth} study VIEs with parametric families of nonlinearities and kernels. They employ integral transform methods to estimate the growth rates of solutions but this work relies on conjecturing the leading order behaviour of solutions and finding a consistent ``asymptotic balance'' from the original equation, so the full proofs of these conclusions remains open; the results of this paper suggest that our methods may well play a role in resolving these open problems. Mydlarczyk provides very good estimates on the size of solutions to \eqref{eq.volterra_integral} in the presence of a blow-up with a power--type kernel \cite{MYDLARCZYK1994248,mydlarczyk1999blow}. However, these estimates do not give a sharp characterisation of the asymptotic growth rate of solutions. In particular, Mydlarczyk's results lead to conclusions of the form
	\[
	0 < \liminf_{t \to T^-}A(x(t),t) < \limsup_{t\to T^-}A(x(t),t) < \infty,
	\] 
	where $A$ is an appropriately chosen monotone function and $T$ is the blow--up time. Evtukhov and Samoilenko also study the power kernel case but specialise to regularly varying nonlinearities, in fact their particular interest is $n$--th order equations \cite{evtukhov2011asymptotic}. In this special case, they improve upon Mydlarczyk's results by proving that
	\[
	\lim_{t \to \tau}B_\tau(x(t),t) = 1, \quad \tau \in \{T, \infty\},
	\]
	for an appropriately chosen function $B_\tau$. To the best of our knowledge, this is the most complete result available in the extant literature.
	
	We first outline our results for the case $H \equiv 0$. Under \eqref{eq.f1_ch6} and \eqref{eq.w}, we identify a \emph{decreasing} function $F_B$ such that 
	\begin{equation}\label{eq.intro_comment_blowup}
	\lim_{t \to T^-}\frac{F_B(x(t))}{T-t} = \sqrt{2w(0)},
	\end{equation}
	where $T$ is the blow--up time. Similarly, in the nonexplosive case, we identify an \emph{increasing} function $F_U$ such that
	\begin{equation}\label{eq.intro_rate_comment}
	\lim_{t \to \infty}\frac{F_U(x(t))}{t} = \sqrt{2w(0)},
	\end{equation}
	under the additional assumption that $w \in L^1(\mathbb{R}^+;\mathbb{R}^+)$. The functions $F_B$ and $F_U$ depend only on $f$ and hence can be estimated from the problem data. Furthermore, our assumptions on the nonlinearity are nonparametric and allow a good deal of generality while still yielding strong conclusions. Interestingly, in spite of the dependence of these growth rates on $w$, the presence of a blow--up is completely \emph{independent} of the value of $w(0)$ and the structure of the kernel under \eqref{eq.w}. 
	
	If $H \in C^1([0,\infty);[0,\infty))$, then \eqref{eq.intro_comment_blowup} is unchanged. However, in the nonexplosive case, $H$ can impact the growth rate of solutions. When $H$ is sufficiently small the growth rate from \eqref{eq.intro_rate_comment} is preserved and we characterise these rate preserving perturbations.
	
	The outline of the paper is as follows: in  Section \ref{sec_main_results_blow_up} we give precise blow--up criteria for equation \eqref{eq.volterra_ch6}, explain how they can be recovered from previous work, and outline the novelty of our methods. Section \ref{sec_growth_rates} details the asymptotic growth rates of solutions to \eqref{eq.volterra_ch6} when $H \equiv 0$ and Section \ref{sec.ext_to_pert} extends these results to the case when $H$ is nontrivial. We provide some simple examples to illustrate the application of our results in Section \ref{sec_examples}. All proofs are deferred to the closing sections of the paper; Section \ref{proofs_prelim} contains proofs of preliminary results and lemmas while Section \ref{proofs_main} contains the proofs of our main results.
	\section{Blow--up Conditions}\label{sec_main_results_blow_up}
	\begin{definition}\label{defn.blow_up}
		A solution to \eqref{eq.volterra_ch6} blows up in finite--time if there exists $T>0$ such that $x \in C([0,T);[0,\infty))$ but $\lim_{t\to T^-}|x(t)| = \infty$; the minimal such $T$ is the blow--up time.
	\end{definition} 
	The following result characterises the finite--time blow--up of solutions to \eqref{eq.volterra_ch6}.
	\begin{theorem}\label{thm.sufficient_blow_up}
		Suppose \eqref{eq.f1_ch6} and \eqref{eq.w} hold. Solutions to \eqref{eq.volterra_ch6} blow--up in finite--time if and only if \begin{equation}\label{condition.finite}
		\int_{\eta}^\infty \frac{du}{\sqrt{\int_0^u f(s)\,ds}} < \infty,\quad \mbox{ for some }\eta>0.
		\end{equation}
	\end{theorem}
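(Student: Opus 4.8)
The plan is to compare \eqref{eq.volterra_ch6} with the autonomous second--order equation $u'' = w(0)f(u)$, whose finite--time blow--up is governed precisely by \eqref{condition.finite} through the classical first--integral identity $u'(t)^2 = u'(0)^2 + 2w(0)\int_{u(0)}^{u(t)}f$. The central step is therefore to show that the Volterra solution satisfies a two--sided version of this relation near its terminal time: there should exist constants $0 < c_1 \le c_2 < \infty$ and a time $t_0$ with
\[
c_1\int_0^{x(t)}f(s)\,ds \;\le\; x'(t)^2 \;\le\; c_2\int_0^{x(t)}f(s)\,ds, \qquad t_0 \le t < \tau,
\]
where $\tau$ is the blow--up time if the solution explodes and $\tau = \infty$ otherwise (only the upper inequality is needed when the solution explodes, and only the lower one when it is global). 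Granting this, put $F(x) = \int_\eta^x du/\sqrt{\int_0^u f(s)\,ds}$; the estimate reads $\sqrt{c_1} \le \frac{d}{dt}F(x(t)) \le \sqrt{c_2}$, so $F(x(t))$ lies between $\sqrt{c_1}(t-t_0)$ and $F(x(t_0)) + \sqrt{c_2}(t-t_0)$. Since $x$ is increasing with $x(t)\to\infty$ as $t\to\tau^-$ (see below), $F(x(t)) \to \sup F = \int^\infty du/\sqrt{\int_0^u f(s)\,ds}$. If \eqref{condition.finite} holds this supremum is finite, so $F(x(t))$ stays bounded, which forces $t - t_0$ to stay bounded, i.e.\ $\tau < \infty$; conversely, if $\tau = T < \infty$, integrating $\frac{d}{dt}F(x(t)) \le \sqrt{c_2}$ over $[t_0,t]$ and letting $t\to T^-$ gives $\int^\infty du/\sqrt{\int_0^u f(s)\,ds} \le F(x(t_0)) + \sqrt{c_2}\,T < \infty$.

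Before proving the energy bound I would record the structural facts used above. Since $f>0$, $w\ge 0$ and $w(0)>0$, \eqref{eq.volterra_ch6} gives $x'(t)\ge 0$, with $x'(t)>0$ for $t>0$ because the integrand is positive near $s=t$; hence $x$ is strictly increasing and $x(t)\ge x(0)>0$. By the local existence theory the solution is either global or explodes in finite time, and a global solution cannot remain bounded: if $x(t)\uparrow L<\infty$ then $f(x(s))\to f(L)>0$, so $x'(t)\ge \tfrac12 f(L)\int_0^\delta w(r)\,dr>0$ for all large $t$, contradicting convergence of $x$. Thus in every case $x(t)\to\infty$ as $t\to\tau^-$, and the localisation below also yields $x'(t)\to\infty$. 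Finally, using the hypothesis that $f$ is asymptotically increasing, I would invoke a preliminary reduction to replace $f$ by an equivalent increasing nonlinearity (without affecting the asymptotics), so that henceforth $f$ is increasing and $x'(t)=\int_0^t w(r)f(x(t-r))\,dr$ is nondecreasing.

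For the energy bound I deliberately avoid differentiating $x'$, since $w$ and $f$ are only continuous and $x\notin C^2$ in general, and split it into two comparisons. First, $x'(t)\asymp g(t):=\int_0^t f(x(s))\,ds$ near $\tau$: writing $x'(t)=w(0)g(t)+\int_0^t(w(t-s)-w(0))f(x(s))\,ds$ and choosing $\delta$ with $|w(r)-w(0)|\le\varepsilon w(0)$ on $[0,\delta]$, the remainder over $[t-\delta,t]$ is at most $\varepsilon w(0)g(t)$, while in the explosive case the remainder over $[0,t-\delta]$ is bounded by a constant (there $w$ and $f(x(\cdot))$ are bounded); since $g(t)\to\infty$, this traps $x'(t)$ between fixed multiples of $g(t)$. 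Second, $g(t)^2\asymp\int_0^{x(t)}f$: both functions increase to $\infty$ and have comparable derivatives, $(g^2)'(t)=2g(t)f(x(t))$ and $\big(\int_0^{x(\cdot)}f\big)'(t)=f(x(t))x'(t)\asymp f(x(t))g(t)$ by the first step, so the functions themselves are comparable near $\tau$. Chaining the two gives $x'(t)^2\asymp g(t)^2\asymp\int_0^{x(t)}f$, which supplies the required bound whenever the solution explodes.

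The remaining piece is the lower bound $x'(t)^2\gtrsim\int_0^{x(t)}f$ in the non--explosive case, where $\int_0^{t-\delta}(w(t-s)-w(0))f(x(s))\,ds$ need not be controllable because $w$ may be unbounded and is not assumed integrable. Here I would argue directly with the recent window: $x'(t)\ge(1-\varepsilon)w(0)\int_{t-\delta}^t f(x(s))\,ds$, and the substitution $u=x(s)$ together with the monotonicity of $x'$ gives $\int_{t-\delta}^t f(x(s))\,ds\ge\frac{1}{x'(t)}\int_{x(t-\delta)}^{x(t)}f(u)\,du$, whence $x'(t)^2\ge(1-\varepsilon)w(0)\int_{x(t-\delta)}^{x(t)}f$. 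It then remains to check that $\int_{x(t-\delta)}^{x(t)}f$ is a fixed proportion of $\int_0^{x(t)}f$: from $f(x)/x\to\infty$ one extracts a short--delay inequality $x'(t)\ge c\,x(t-\delta)$ for large $t$, which forces at least geometric growth of $x$, so $x(t-\delta)\le\theta x(t)$ with $\theta<1$ eventually, and for an increasing $f$ one has $\int_{\theta v}^v f\ge(1-\theta)\int_0^v f$. I expect the main obstacle to be exactly this part --- extracting from the merely continuous, possibly non--integrable kernel enough control to run the localisation and the auxiliary monotonicity and geometric--growth estimates in the non--explosive regime --- rather than the ODE comparison, which is routine once the energy bound is in hand.
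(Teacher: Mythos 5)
Your outline is correct, and it takes a genuinely more direct route than the paper. The paper never argues on the Volterra solution alone: it sandwiches $x$ between solutions of the bounded--delay equations \eqref{eq.bounded_delay_kernel} and \eqref{eq.bounded_delay} and delegates the two directions to Lemma \ref{lemma.bounded_delay_kernel_explodes} (where an energy inequality $(z')^2\gtrsim \bar{\Phi}(z(t))-\bar{\Phi}(z(t-\delta))$ comes from convexity of the $C^2$ comparison solution, the delayed term being discarded via the superexponential--growth Lemmas \ref{lemma.bounded_delay_1} and \ref{lemma.bounded_delay_2}) and to Lemma \ref{lemma.bounded_delay_no_explosion} (a first--crossing argument against the explicit global barrier $\alpha'=\sqrt{2K_1\int_1^{\alpha}\phi_\kappa(u)\,du}$). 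You instead prove the two--sided first--integral estimate $x'(t)^2\asymp\int_0^{x(t)}f(s)\,ds$ directly on $x$: in the explosive case by splitting the convolution at $t-\delta$ (essentially the non--sharp version of the computation the paper only deploys later, for the rate in Theorem \ref{thm.expl_rate}), and in the global case by the window bound plus the substitution $u=x(s)$, which replaces the paper's convexity/second--derivative device and avoids both the auxiliary delay equations and the barrier construction. What you lose is reusability (the paper's delay--equation lemmas are exactly the tools it recycles for the growth--rate theorems); what you gain is a shorter, self--contained proof of the dichotomy.

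Two steps need tightening. First, you cannot literally ``replace $f$ by an equivalent increasing nonlinearity'': that changes the equation and its solution. What you may do is fix an increasing $\phi\sim f$ and constants $0<c_-\le c_+$ with $c_-\phi\le f\le c_+\phi$ on $[x(0),\infty)$ (enough, since $x(t)\ge x(0)>0$); then $x'$ is nondecreasing only up to the factor $c_+/c_-$, which still suffices for your substitution step but the constants must be carried. Second, the inference ``$x'(t)\ge c\,x(t-\delta)$ forces geometric growth, hence $x(t-\delta)\le\theta x(t)$'' does not work as stated: an exponential lower bound on $x$ gives no control of the ratio $x(t-\delta)/x(t)$. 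The repair is short and uses only what you already have: apply the short--delay inequality with half the window, $x'(s)\ge c\,x(s-\delta/2)$ for large $s$ with $c$ arbitrarily large (because $f(x)/x\to\infty$), and integrate over $[t-\delta/2,t]$ to get $x(t)\ge x(t)-x(t-\delta/2)\ge c(\delta/2)\,x(t-\delta)$, so in fact $x(t-\delta)/x(t)\to 0$. This is precisely the mechanism of the paper's Lemma \ref{lemma.bounded_delay_1}, run directly on $x$; with it your chain $x'(t)^2\ge(1-\varepsilon)w(0)\int_{x(t-\delta)}^{x(t)}f(u)\,du\ge c_1\int_0^{x(t)}f(u)\,du$ closes and the theorem follows as you describe.
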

	Under \eqref{eq.f1_ch6}, the negation of \eqref{condition.finite} is of course
	\begin{equation}\label{condition.infinite}
	\int_{\eta}^\infty \frac{du}{\sqrt{\int_0^u f(s)\,ds}} = \infty, \quad\mbox{ for all }\eta>0,
	\end{equation}
	and, by Theorem \ref{thm.sufficient_blow_up}, condition \eqref{condition.infinite} guarantees that solutions to \eqref{eq.volterra_ch6} are global; we record condition \eqref{condition.infinite} for future reference.
	
	Theorem \ref{thm.sufficient_blow_up} is a special case of the following result.
	\begin{theorem}[Brunner and Yang {\cite[Theorem 3.9]{brunner2012blow}}]\label{brunner}
		Suppose $\psi>0$, $h(t) \geq 0$ for $t\geq 0$, $w(t) = t^{\beta-1}w_1(t) \geq 0$ for $t\geq 0$, $\beta>0$, and $w_1$ is bounded on every compact interval with $\inf_{s\in [0,\delta]}w_1(s)>0$ for some $\delta>0$. Suppose that $G:\mathbb{R}^+ \times \mathbb{R}^+ \mapsto \mathbb{R}^+$ is continuous (uniformly in its second argument), increasing in its second argument, and satisfies $\lim_{u\to\infty}G(0,u)/u=\infty$. Solutions to 
		\begin{align}\label{hammerstein}
		u'(t) = h(t) + \int_0^t w(t-s)G(s,u(s))\,ds, \quad t \geq 0,
		\end{align}
		blow--up in finite--time if and only if there exists a $t^*>0$ such that
		\begin{align}\label{eq.t_star}
		\int_0^{t^*} h(s)\,ds + \min_{u \in [0,\infty)}\left( \int_0^{t^*} W(t^*-s)G(s,u)\,ds - u \right) >0, \quad W(t)=\int_0^t w(s)\,ds,
		\end{align}
		and
		\begin{align}\label{eq.BY_condition}
		\int_{\eta}^\infty \left(\frac{u}{G(t^*,u)}\right)^{1/(1+\beta)} \frac{du}{u} < \infty, \quad\mbox{for some }\eta>0.
		\end{align}
	\end{theorem}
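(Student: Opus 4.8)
The plan is to pass to the integrated Volterra form, exploit the monotonicity that positive solutions are forced to have, and then trap the solution between subsolutions and supersolutions of a tractable \emph{model} problem whose kernel is the pure power $t^{\beta-1}$ and whose nonlinearity is $G$ with its first argument frozen. Integrating \eqref{hammerstein} and interchanging the order of integration yields
\begin{gather*}
u(t)=\psi+H(t)+\int_0^t W(t-s)\,G(s,u(s))\,ds, \\
H(t):=\int_0^t h(\sigma)\,d\sigma, \qquad W(t):=\int_0^t \sigma^{\beta-1}w_1(\sigma)\,d\sigma,
\end{gather*}
with $u(0)=\psi$; since $h,w,G\ge 0$ the maximally defined solution $u$ is nondecreasing, and the hypotheses on $w_1$ furnish constants $0<c_1\le c_2$ with $c_1 t^{\beta}\le W(t)\le c_2 t^{\beta}$ on some interval $[0,\delta]$. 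The two conditions play complementary roles. Condition \eqref{eq.BY_condition} is a \emph{fractional Osgood condition}: for the model equation $y(t)=y_0+\int_0^t(t-s)^{\beta}g(y(s))\,ds$ with $g$ increasing and superlinear at infinity, one expects finite--time blow--up precisely when $\int^{\infty}(r/g(r))^{1/(1+\beta)}\,dr/r<\infty$; for $\beta=1$ this reduces, via the classical energy identity $\tfrac12((y')^2)'=g(y)\,y'$, to the Osgood--type condition of Theorem~\ref{thm.sufficient_blow_up}. Condition \eqref{eq.t_star} is an ``ignition'' requirement: it says that at some time $t^{*}$ the increasing map $u\mapsto\psi+H(t^{*})+\int_0^{t^{*}}W(t^{*}-s)G(s,u)\,ds$ stays strictly above the identity, which --- because monotonicity confines $u$ to $[\psi,\infty)$ --- is exactly what places the comparison in the region where the nonlinearity acts superlinearly, and it expresses the familiar heuristic that blow--up occurs when the energy fed in by the source outweighs what the medium can dissipate.

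For \textbf{sufficiency}, assume \eqref{eq.t_star} at $t^{*}$ and \eqref{eq.BY_condition}. Using the monotonicity of $u$ and of $W$, I would first show that the shifted solution $v(t):=u(t^{*}+t)$ satisfies $v(t)\ge v(0)+c_1\int_0^t(t-s)^{\beta}\,G(t^{*}+s,v(s))\,ds$ for small $t\ge 0$ (drop the nonnegative part of the convolution over $[0,t^{*}]$ and bound $W$ below by $c_1 r^{\beta}$). Invoking the uniform (in the second variable) continuity of $G$, on a sufficiently short interval $G(t^{*}+s,\cdot)$ may be replaced by $g(\cdot):=\tfrac12 c_1 G(t^{*},\cdot)$, so that $v$ is a supersolution there of the model equation; an iteration over successive short subintervals (each admissible because $v$ is increasing and, once \eqref{eq.t_star} has supplied the foothold, stays superlinearly driven) propagates this. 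A comparison principle for the monotone Volterra operator then gives $v\ge y$, where $y$ solves the model equation. Finally I would show $y$ blows up in finite time under \eqref{eq.BY_condition} by exhibiting the explicit self--similar subsolution $z(t):=\Phi^{-1}(\kappa(T-t))$, with $\Phi(\rho):=\int_\rho^{\infty}(r/g(r))^{1/(1+\beta)}\,dr/r$ (finite by \eqref{eq.BY_condition}, and unaffected in its convergence by the constant $\tfrac12 c_1$) and $\kappa,T$ determined by $\beta$: then $y\ge z\to\infty$ as $t\to T^{-}$, so $v$, hence $u$, blows up by time $t^{*}+T$.

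For \textbf{necessity}, suppose $u$ blows up at $T<\infty$. Near $T$, monotonicity together with $W(r)\ge c_1 r^{\beta}$ on $[0,\delta]$ gives, for $t\in(T-\delta,T)$, a lower bound $u(t)\ge u(t_0)+c_1\int_{t_0}^{t}(t-s)^{\beta}\,G(t_0,u(s))\,ds$ with $t_0$ close to $T$ (freezing the first argument by uniform continuity, the error being a harmless constant since $u(t_0)$ is large). Comparing with the model equation: were $\int^{\infty}(r/G(t_0,r))^{1/(1+\beta)}\,dr/r$ divergent, a supersolution of the form $\Phi^{-1}(\kappa(t+1))$ --- now with $\Phi$ finite on every bounded interval but $\Phi(\rho)\to\infty$ as $\rho\to\infty$ --- would dominate the model solution and force that lower bound, hence $u$, to remain finite on $[t_0,\infty)$, contradicting blow--up at $T$; so \eqref{eq.BY_condition} holds with $t^{*}=t_0$. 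For \eqref{eq.t_star} I argue contrapositively: if \eqref{eq.t_star} fails then at every time $t$ the increasing map $\Theta_t(u):=\psi+H(t)+\int_0^t W(t-s)G(s,u)\,ds$ fails to remain above the identity, hence (being superlinear at infinity and exceeding the identity near $0$) has a least fixed point $z_*(t)<\infty$; combining the monotonicity bound $u(t)\le\Theta_t(u(t))$ with a continuity argument traps $u(t)\le z_*(t)$ for all $t\ge 0$, precluding finite--time blow--up, so \eqref{eq.t_star} is necessary.

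The main obstacle is the pair of comparison steps against the power--kernel model, which combine three difficulties. First, the kernel is only \emph{comparable} to $t^{\beta-1}$ near zero, not equal to it, so every estimate carries multiplicative error constants that must be tracked and absorbed into the parameter $\kappa$ of the model solution. Second, the $t$--dependence of $G$ forces its ``freezing'' to be localised to short intervals and then propagated by an iteration that does not degrade the growth; obtaining a \emph{sharp}, rather than merely order--of--magnitude, conclusion is delicate here, since a naive freezing forfeits a constant at each step. Third, verifying that $\Phi^{-1}(\kappa(T\mp t))$ is a genuine subsolution or supersolution of the model \emph{Volterra} equation --- not merely of the associated fractional differential equation --- requires a careful computation with the Abel--type operator and with the scaling of $\int_0^t(t-s)^{\beta}\,y(s)^{\theta}\,ds$ for power functions $y$; it is precisely this computation that produces the exponent $1/(1+\beta)$ appearing in \eqref{eq.BY_condition}.
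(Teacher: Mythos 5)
First, a point of context: the paper does not prove Theorem \ref{brunner} at all — it is quoted from Brunner and Yang, and the paper only describes their argument (pass to the integrated Volterra form, discretise along times $t_n$ at which the solution equals $R^n$, and show the increments $t_{n+1}-t_n$ are summable or not according as \eqref{eq.BY_condition} holds or fails). So your sub/supersolution comparison against a frozen power-kernel model is in any case a different route from the cited proof, and is closer in spirit to the comparison machinery this paper builds for \eqref{eq.volterra_ch6}. The difficulty is that, as written, your text is a programme rather than a proof, and the missing steps are the decisive ones.

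Concretely: (a) in the sufficiency half, \eqref{eq.t_star} is never actually used — you call it a ``foothold'' but never prove that the solution exceeds every finite level. That is exactly the work this condition does: since $G(s,\cdot)$ is superlinear only at infinity and may vanish at $0$ (e.g.\ $G(s,u)=u^p$ with small data), solutions can remain bounded even when \eqref{eq.BY_condition} holds, so your subsolution comparison cannot even be anchored until unboundedness is established; a monotonicity/fixed-point argument deriving $u(t)\to\infty$ from \eqref{eq.t_star} must be supplied. (b) The step you yourself label the ``main obstacle'' — verifying that $\Phi^{-1}(\kappa(T-t))$ is a genuine subsolution (and its analogue a supersolution) of the power-kernel Volterra equation — is the mathematical heart of the theorem: for $\beta\neq 1$ there is no energy identity, the Abel-type estimates are delicate, and it is precisely here that the exponent $1/(1+\beta)$ must emerge; acknowledging the difficulty does not discharge it. (c) In the necessity half you obtain \eqref{eq.BY_condition} at a time $t_0$ near the blow-up time and \eqref{eq.t_star} at a possibly different time, whereas the theorem asserts one $t^*$ satisfying both (this is repairable, since the left side of \eqref{eq.t_star} is nondecreasing in $t^*$, but it must be said); moreover, failure of \eqref{eq.t_star} only yields a point $u_0$ with $H(t)+\int_0^t W(t-s)G(s,u_0)\,ds\le u_0$, which differs from your ``$\Theta_t$ crosses the identity'' by the offset $\psi$, and the trapping bound $u(t)\le z_*(t)$ needs a genuine first-crossing argument because $t\mapsto z_*(t)$ need not be continuous. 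Until (a)–(c) are carried out, the proposal does not establish Theorem \ref{brunner}.
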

	To recover Theorem \ref{thm.sufficient_blow_up} from Theorem \ref{brunner}, set $h \equiv 0$, $\beta = 1$, and $G(s,u) = f(u)$. Thus \eqref{eq.t_star} holds if $\min_{u \in [0,\infty)}\left(f(u) \int_0^{t^*}W(s)\,ds - u\right)>0$ and we can choose $t^*>0$ sufficiently large to satisfy this condition since $\int_0^\infty W(s)\,ds=\infty$. In our case, condition \eqref{eq.BY_condition} reduces to the finiteness of the integral 
$
	\int_1^\infty dx/\sqrt{xf(x)}
$, but if $f\in C((0,\infty);(0,\infty))$ is increasing, then
		\begin{align}\label{condition.ours}
		\int_1^\infty \frac{dx}{\sqrt{xf(x)}} < \infty\quad \mbox{ if and only if }\quad\int_1^\infty \frac{dx}{\sqrt{\int_0^x f(s)\,ds}} < \infty,
		\end{align}
		i.e. the conclusions of Theorems \ref{thm.sufficient_blow_up} and \ref{brunner} are consistent.
		
	While the conclusion of Theorem \ref{thm.sufficient_blow_up} is known, unlike Theorem \ref{brunner}, its proof yields considerable insight into the rate at which solutions to \eqref{eq.volterra_ch6} grow. The proof of Theorem \ref{brunner} proceeds by integrating \eqref{hammerstein} to obtain an integral equation of the form
	\[
	u(t) = u(0) + H(t) + \int_0^t W(t-s) G(s,u(s))\,ds, \quad t \geq 0. 
	\] 
	The integral equation above is discretised along a sequence $(t_n)_{n \geq 1}$ upon which the solution to \eqref{hammerstein} grows geometrically, i.e. $u(t_n) = R^n$ for each $n\geq 1$ and some $R>1$. In all cases, $\lim_{n\to\infty}{t_{n+1} - t_n} = 0$ and moreover, if there is a global solution, $h_n = t_{n+1}-t_n$ tends to zero so fast that $\sum_{n=1}^\infty h_n < \infty$, contradicting the existence of a global solution. Conversely, in the presence of a blow--up solution, $(h_n)_{n \geq 1}$ is proven not to be summable using similar difference inequalities. Hence $\lim_{n\to\infty}t_n=\infty$, contradicting the assumption that the solution explodes in finite--time. In both cases, the summability of the sequence $(h_n)_{n \geq 1}$ hinges on \eqref{eq.BY_condition}. Naturally, some rough rates of growth are implicit in the construction described above, but it is difficult to see how one could obtain sharp estimates on rates of asymptotic growth of solutions from this approach, even for the simpler equation \eqref{eq.volterra_ch6}.
	
	In contrast, we exploit the enhanced differential structure of \eqref{eq.volterra_ch6} and employ comparison equations of the form
	\begin{align}\label{eq.bounded_discuss}
	z'(t) &= C \int_{t-\delta}^t f(z(s))\,ds, \quad t \geq T^* \geq 0, \quad\mbox{ with } \delta>0 \mbox{ and }C >0,
	\end{align}
	to establish sharp blow--up conditions. The fact that comparison equations such as \eqref{eq.bounded_discuss} yield sharp blow--up criteria suggests that these bounded delay equations are promising candidates for investigating the more subtle issue of asymptotic behaviour. Under mild continuity assumptions,
	\begin{align}\label{eq.2nd_order_discuss}
	z''(t) = C f(z(t)) - C f(z(t-\delta)), \quad t > T^* > \delta.
	\end{align}
	Solutions of \eqref{eq.volterra_ch6} and \eqref{eq.bounded_discuss} will grow extremely rapidly when $f(x)/x \to \infty$ as $x\to\infty$ so we conjecture that the delayed term in \eqref{eq.2nd_order_discuss} is negligible asymptotically. Following this line of reasoning, we expect the second order ODE 
	$
	z''(t) = f(z(t))
	$
	to give a good asymptotic approximation to solutions of \eqref{eq.volterra_ch6}; this approximation is at the heart of our analysis and the definitions which follow are the product of our efforts to systematically exploit this idea.
	\begin{definition}\label{defn.super_exp}
		We say $g \in C((0,\infty);(0,\infty))$ exhibits \emph{superexponential growth} if $g(x)\to\infty$ as $x\to\infty$ and \[
		\lim_{x\to\infty}\frac{g(x-\epsilon)}{g(x)} = 0, \quad \mbox{for each }\epsilon > 0.
		\]
	\end{definition}
	Continuous, positive functions which obey $g'(x)/g(x) \to \infty$ as $x\to\infty$ exhibit superexponential growth and this motivates our choice of terminology.
	\begin{definition}\label{def.preserves_super_exp}
		$\phi \in C((0,\infty);(0,\infty))$ \emph{preserves superexponential growth} if for each function $g$ which exhibits superexponential growth and each $\epsilon>0$, we have
		\[
		\lim_{x\to\infty}\frac{\phi(g(x-\epsilon))}{\phi(g(x))} = 0.
		\]
	\end{definition}
	\noindent If $\phi,f\in C((0,\infty);(0,\infty))$ obey $\phi \sim f$ and $\phi$ preserves superexponential growth, then so does $f$. The following lemma (whose proof is elementary and thus omitted) records several important classes of nonlinear functions which preserve superexponential growth and frequently arise in applications.
	\begin{proposition}\label{LEMMA.PRESERVES}
		If $\phi \in  C([0,\infty) ; [0,\infty))$ obeys any of the following conditions
		\begin{enumerate}[(i.)]
			\item $x \mapsto \phi(x)/x$ is eventually increasing,
			\item $\phi$ is increasing and convex,
			\item $\phi \in \text{RV}_\infty(\alpha)$ for some $\alpha>0$,
		\end{enumerate}
		then $\phi$ preserves superexponential growth.
	\end{proposition}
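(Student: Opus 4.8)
The plan is to reduce all three hypotheses to a single elementary claim about ratios of large arguments, and then verify that claim case by case. First I would fix a function $g$ exhibiting superexponential growth and some $\epsilon>0$, and set $y=g(x)$ and $z=g(x-\epsilon)$. Since $g(x)\to\infty$ as $x\to\infty$ we also have $z=g(x-\epsilon)\to\infty$, while $z/y\to 0$ by Definition~\ref{defn.super_exp}; hence $0<z<y$ for all large $x$, and (since we are only interested in $\phi$ that are eventually positive, so that the quotient is defined) $\phi(y),\phi(z)>0$ eventually. So it suffices to show that $\phi(z)/\phi(y)\to 0$ whenever $z,y\to\infty$ with $z/y\to 0$.

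Next I would dispatch the three cases. \emph{(i)} If $u\mapsto\phi(u)/u$ is increasing on $[X_0,\infty)$, then for $x$ large enough that $X_0\le z\le y$ we have $\phi(z)/z\le\phi(y)/y$, hence $\phi(z)/\phi(y)\le z/y\to 0$. \emph{(ii)} If $\phi$ is increasing and convex, then it is nonconstant and a convex function bounded above on a half-line is nonincreasing there, so $\phi(y)\to\infty$; writing $z=(z/y)\,y+(1-z/y)\cdot 0$ and using convexity together with $\phi(0)\ge 0$ gives $\phi(z)\le(z/y)\phi(y)+\phi(0)$, so that $\phi(z)/\phi(y)\le z/y+\phi(0)/\phi(y)\to 0$. \emph{(iii)} If $\phi\in\mathrm{RV}_\infty(\alpha)$ with $\alpha>0$, then by Potter's inequality, for any $A>1$ there is $X$ with $\phi(z)/\phi(y)\le A\max\{(z/y)^{\alpha/2},(z/y)^{3\alpha/2}\}$ for all $z,y\ge X$; since $z/y\to 0$ we eventually have $z/y<1$, the maximum equals $(z/y)^{\alpha/2}$, and this tends to $0$.

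I do not expect a genuine obstacle here, which is consistent with the result being quoted as elementary: once the reduction is in place, (i) and (ii) are one-line monotonicity and convexity estimates. The only nontrivial input is in case (iii), where one appeals to the uniform convergence theorem for regularly varying functions (in the guise of Potter's bounds); the single point requiring care there is to take the exponent gap $\delta=\alpha/2<\alpha$, so that the dominant power $(z/y)^{\alpha-\delta}=(z/y)^{\alpha/2}$ still carries a positive exponent and therefore vanishes as $z/y\to 0$.
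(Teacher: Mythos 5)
Your proposal is correct, and in fact the paper offers nothing to compare it with: the proof of Proposition~\ref{LEMMA.PRESERVES} is explicitly omitted there as elementary, so your argument fills that gap rather than diverging from a published one. The reduction you make is the right one --- since $g(x)\to\infty$ forces $g(x-\epsilon)\to\infty$ while $g(x-\epsilon)/g(x)\to 0$, it suffices to show $\phi(z)/\phi(y)\to 0$ whenever $z,y\to\infty$ with $z/y\to 0$ --- and your three case arguments (monotonicity of $u\mapsto\phi(u)/u$ giving $\phi(z)/\phi(y)\le z/y$; the convex combination $z=(z/y)y+(1-z/y)\cdot 0$ giving $\phi(z)/\phi(y)\le z/y+\phi(0)/\phi(y)$; Potter's bounds with exponent gap $\delta=\alpha/2$ so the surviving power $(z/y)^{\alpha/2}$ still vanishes) are each sound. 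Two small points worth flagging, both really imprecisions in the statement rather than in your proof: the proposition allows $\phi$ to take the value $0$ and ``increasing'' to be non-strict, so degenerate cases (e.g.\ $\phi\equiv c$ in case (ii), which is non-strictly increasing and convex yet gives ratio $1$) must be excluded; your observation that a nonconstant increasing convex $\phi$ necessarily tends to $+\infty$, together with your remark that $\phi$ must be eventually positive for the defining quotient in Definition~\ref{def.preserves_super_exp} to make sense, is exactly the right way to dispose of them, and is consistent with the paper's own usage (there $\phi$ is positive, as in Corollary~\ref{corollary.convex}).
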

	\begin{remark}
	$f : x \mapsto f(x)$ is eventually increasing if exists a number $X$ such that $f(x)$ is increasing for $x \in [X,\infty)$.
	\end{remark}
	\section{Growth Rates of Solutions}\label{sec_growth_rates}
	In order to compute rates of growth of solutions, define the functions
	\begin{equation}\label{defn.F_blow_up}
	F_B(x) = \int_x^\infty \frac{du}{\sqrt{\int_0^u f(s)\,ds}}, \quad\mbox{ for each } x > 0,
	\end{equation}
	and 
	\begin{equation}\label{defn.F_unbounded}
	F_U(x) = \int_1^x \frac{du}{\sqrt{\int_0^u f(s)\,ds}}, \quad\mbox{ for each } x > 0.
	\end{equation}
	$F_B$ characterises the rate of growth to infinity of solutions which blow--up in finite time, while $F_U$ captures rates of growth of unbounded but nonexplosive solutions. In order to compute growth rates, we ask that the nonlinearity preserves superexponential growth, in the sense of Definition \ref{def.preserves_super_exp}. As discussed in Section \ref{sec_growth_rates}, preservation of superexponential growth is a relatively mild hypothesis satisfied by broad classes of nonlinearities commonly found in applications (see Proposition \ref{LEMMA.PRESERVES}).
	\begin{theorem}\label{thm.expl_rate}
		Suppose \eqref{eq.f1_ch6} and \eqref{eq.w} hold. If \eqref{condition.finite} holds and $f$ preserves superexponential growth, then solutions to \eqref{eq.volterra_ch6} blow--up in finite--time and obey
		\[
		\lim_{t\to T^-}\frac{F_B(x(t))}{T-t} = \sqrt{2 w(0)},
		\]
		where $T$ denotes the blow--up time.
	\end{theorem}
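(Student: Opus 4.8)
The idea is to turn the heuristic recorded before Definition~\ref{defn.super_exp} — that, because $f(x)/x\to\infty$, the tail and delayed parts of the kernel in \eqref{eq.volterra_ch6} are asymptotically irrelevant and $x$ should behave like a solution of $z''=w(0)f(z)$ — into the precise ``energy'' statement
\[
\lim_{t\to T^-}\frac{x'(t)^2}{2w(0)\int_0^{x(t)}f(s)\,ds}=1,
\]
and then read off the rate. Writing $G(t):=\int_0^{x(t)}f(s)\,ds$, one has $\tfrac{d}{dt}F_B(x(t))=-x'(t)/\sqrt{G(t)}$ and $\tfrac{d}{dt}(T-t)=-1$; since $F_B(x(t))\to0$ and $T-t\to0$ as $t\to T^-$, a one--sided l'H\^opital argument shows that the energy statement forces $x'(t)/\sqrt{G(t)}\to\sqrt{2w(0)}$, i.e. $F_B(x(t))/(T-t)\to\sqrt{2w(0)}$. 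So everything reduces to the displayed limit.

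\textbf{Preliminaries and localisation of the kernel.} By Theorem~\ref{thm.sufficient_blow_up}, \eqref{condition.finite} guarantees blow--up at a finite time $T$. Since $w,f>0$ and $x(0)>0$, $x$ is increasing with $x(t)\uparrow\infty$, hence $f(x(t))\to\infty$. From $x'(s)\le\big(\max_{[0,T]}w\big)\int_0^s f(x(r))\,dr$ and the fact that $x$ is unbounded on $[0,T)$ one gets $\int_0^{T}f(x(s))\,ds=\infty$, so $\int_{T-\delta}^{T}f(x(s))\,ds=\infty$ for every $\delta>0$. Now fix a small $\epsilon>0$ and pick $\delta>0$ with $|w(u)-w(0)|<\epsilon$ on $[0,\delta]$ and with $x(T-\delta)$ in the range where $f$ is increasing. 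Splitting $\int_0^t=\int_0^{t-\delta}+\int_{t-\delta}^t$ in \eqref{eq.volterra_ch6}: the first integral is bounded uniformly in $t<T$ (the arguments of $w$ stay in $[\delta,T]$ and those of $x$ in the compact interval $[0,T-\delta]$), while on the second $w(t-s)$ is within $\epsilon$ of $w(0)$. With $P(t):=\int_{t-\delta}^t f(x(s))\,ds$, which tends to $\infty$ by the previous sentence, this yields, for all $t$ close enough to $T$,
\[
(w(0)-\epsilon)\,P(t)\ \le\ x'(t)\ \le\ (w(0)+2\epsilon)\,P(t).
\]

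\textbf{The energy estimate.} The merit of $P$ is regularity: $P\in C^1$ with $P'(t)=f(x(t))-f(x(t-\delta))$, and $P$ is eventually increasing because $f(x(t))\to\infty$ while $f(x(t-\delta))$ remains bounded (as $x(t-\delta)\to x(T-\delta)<\infty$). Since $G'(t)=f(x(t))x'(t)$ and $\big(\tfrac12 P^2\big)'(t)=P(t)P'(t)$,
\[
\frac{G'(t)}{\big(\tfrac12 P(t)^2\big)'}=\frac{x'(t)}{P(t)}\cdot\frac{1}{\,1-f(x(t-\delta))/f(x(t))\,},
\]
and the $\limsup$ and $\liminf$ of the right--hand side as $t\to T^-$ lie in $[w(0)-\epsilon,\,w(0)+2\epsilon]$, by the localisation bound and $f(x(t-\delta))/f(x(t))\to0$. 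As $G,\tfrac12 P^2\to\infty$ with $\tfrac12 P^2$ eventually increasing, a Stolz-type l'H\^opital argument transfers this to $G(t)/\big(\tfrac12 P(t)^2\big)$; feeding that back into the localisation bound squeezes $x'(t)^2/(2G(t))$ between $(w(0)-\epsilon)^2/(w(0)+2\epsilon)$ and $(w(0)+2\epsilon)^2/(w(0)-\epsilon)$ as $t\to T^-$. Letting $\epsilon\downarrow0$ gives $x'(t)^2/(2G(t))\to w(0)$, which together with the reduction in the first paragraph proves Theorem~\ref{thm.expl_rate}.

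\textbf{Where the difficulty lies.} The crux is that under \eqref{eq.w} the solution need not be twice differentiable, so one cannot simply multiply $x''$ by $x'$ and integrate; replacing $x'$ by the $C^1$ surrogate $P(t)=\int_{t-\delta}^t f(x(s))\,ds$ circumvents this, and $P$ is exactly the scalar shadow of the bounded--delay comparison equations \eqref{eq.bounded_discuss}, for which the second--order form \eqref{eq.2nd_order_discuss} is legitimate. In the present explosive case the delayed term is harmless because $x(t-\delta)$ stays bounded as $t\to T^-$; it is in the nonexplosive companion (where $x(t-\delta)\to\infty$) that one genuinely needs $f$ to preserve superexponential growth, and keeping that hypothesis in force here makes the two analyses uniform. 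Beyond that, the work is bookkeeping: verifying the monotonicity and divergence needed for the two nested l'H\^opital/Stolz passages, and choosing $\epsilon$ and $\delta$ so that ``$w(t-s)\approx w(0)$ near the diagonal'' and ``$f(x(t-\delta))\ll f(x(t))$'' hold compatibly and simultaneously as $t\to T^-$.
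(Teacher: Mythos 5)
Your proposal is correct and follows essentially the same route as the paper's proof: both split the convolution into a uniformly bounded far--past part and a near--diagonal part where $w\approx w(0)$, use L'H\^opital--type arguments (exploiting that $x(t-\delta)$, hence $f(x(t-\delta))$, stays bounded as $t\to T^-$) to arrive at the energy asymptotics $x'(t)\big/\sqrt{2w(0)\int_0^{x(t)}f(s)\,ds}\to 1$, i.e. the paper's \eqref{limit.second_ode}, and then convert this into the stated limit for $F_B(x(t))/(T-t)$ (by direct integration in the paper, by a $0/0$ l'H\^opital step in yours). The remaining differences are cosmetic — you normalise $x'$ by $P(t)=\int_{t-\delta}^t f(x(s))\,ds$ and compare $\int_0^{x(t)}f(s)\,ds$ with $\tfrac12P(t)^2$, whereas the paper normalises by $\int_0^t f(x(s))\,ds$ and applies a second l'H\^opital — and your requirement that $x(T-\delta)$ lie ``in the range where $f$ is increasing'' is never used (and is not guaranteed by \eqref{eq.f1_ch6}, which only gives $f$ asymptotic to an increasing function), so it should simply be dropped.
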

	When studying growth rates of non--explosive solutions, we further ask that
	\begin{equation}\label{eq.w_L^1}
	w \in L^1(\mathbb{R}^+;\mathbb{R}^+), \quad ||w||_{L^1} = {\mathcal W}.
	\end{equation}
	If $w$ does not have finite $L^1$--norm, then it can contribute to faster growth in the convolution term when the solution is global; assuming \eqref{eq.w_L^1} rules this out and allows us to prove the following analogue of Theorem \ref{thm.expl_rate} for non--explosive solutions.
	\begin{theorem}\label{thm.non_expl_rate}
		Suppose \eqref{eq.f1_ch6}, \eqref{eq.w}, and \eqref{eq.w_L^1} hold. If \eqref{condition.infinite} holds and $f$ preserves superexponential growth, then solutions to \eqref{eq.volterra_ch6} obey $x \in C([0,\infty);(0,\infty))$ and
		\begin{equation}\label{eq.unbounded_rate_unperturbed}
		\lim_{t\to\infty}\frac{F_U(x(t))}{t} = \sqrt{2 w(0)}.
		\end{equation}
	\end{theorem}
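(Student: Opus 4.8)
The plan is to sandwich the solution between solutions of the bounded--delay comparison equations $z'(t)=C\int_{t-\delta}^t f(z(s))\,ds$ with $C$ slightly below and slightly above $w(0)$, to compute the growth rate of those comparison solutions by an energy argument, and then to let the parameters converge to $w(0)$. First I would record the easy structural facts: since $w,f>0$ and $x(0)>0$ the solution is increasing, and if it were bounded with limit $L$ then $x'(t)=\int_0^t w(t-s)f(x(s))\,ds\to f(L)\mathcal{W}>0$, which is incompatible with boundedness; hence every global solution satisfies $x(t)\uparrow\infty$. Next, since $w(0)>0$ and $w$ is continuous there is $\delta_0>0$ with $w\ge \tfrac12 w(0)$ on $[0,\delta_0]$, so $x'(t)\ge\tfrac12 w(0)\int_{t-\delta}^t f(x(s))\,ds\ge\tfrac12 w(0)\delta f(x(t-\delta))$ for every $\delta\le\delta_0$ and all large $t$ (using monotonicity of $f$ on the relevant range). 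Iterating this inequality once and using $f(u)/u\to\infty$ gives $x(t)\ge c\,\delta^2 f(x(t-2\delta))$, whence $x(t-2\delta)/x(t)\to0$; since this holds for every small $\delta$, $x$ exhibits superexponential growth in time, i.e. $x(t-\mu)/x(t)\to0$ for every $\mu>0$, and therefore $f(x(t-\mu))/f(x(t))\to0$ because $f$ preserves superexponential growth. Using this to bound the tail, $\int_0^{t-\delta}w(t-s)f(x(s))\,ds\le\mathcal{W}f(x(t-\delta))=o\!\left(\int_{t-\delta}^t f(x(s))\,ds\right)$, together with $\sup_{[0,\delta]}w\le w(0)+\epsilon$ for $\delta$ small, yields for each $\epsilon>0$ a $\delta>0$ and a $T^*$ with
\[
(w(0)-\epsilon)\int_{t-\delta}^t f(x(s))\,ds\le x'(t)\le(w(0)+\epsilon)\int_{t-\delta}^t f(x(s))\,ds,\qquad t\ge T^*.
\]

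Let $z_\pm$ solve $z_\pm'(t)=(w(0)\pm\epsilon)\int_{t-\delta}^t f(z_\pm(s))\,ds$ with $z_\pm=x$ on $[T^*-\delta,T^*]$. Since \eqref{condition.infinite} holds, the analysis underlying Theorem~\ref{thm.sufficient_blow_up} applied to these delay equations shows that $z_+$ is global, and a quasimonotone comparison argument (valid because $f$ is increasing on the range of the solutions for $T^*$ large) gives $z_-(t)\le x(t)\le z_+(t)$ for $t\ge T^*$. It then suffices to show that any solution $z$ of $z'(t)=C\int_{t-\delta}^t f(z(s))\,ds$ with $z\uparrow\infty$ obeys $\lim_{t\to\infty}F_U(z(t))/t=\sqrt{2C}$. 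Exactly as above, $z$ grows superexponentially, so $f(z(t-\delta))/f(z(t))\to0$; since $f\circ z$ is continuous we have $z\in C^2$ and $z''(t)=Cf(z(t))-Cf(z(t-\delta))$. Multiplying by $z'$ and integrating from $T^*$ to $t$,
\[
\tfrac12 z'(t)^2-\tfrac12 z'(T^*)^2=C\int_{z(T^*)}^{z(t)}f(u)\,du-C\int_{T^*}^t f(z(s-\delta))z'(s)\,ds,
\]
and since $f(z(s-\delta))/f(z(s))\to0$ while $\int^\infty f(z(s))z'(s)\,ds=\int^\infty f=\infty$, the last integral is $o\!\left(\int_{z(T^*)}^{z(t)}f(u)\,du\right)$; hence $z'(t)^2=2C\int_0^{z(t)}f(u)\,du\,(1+o(1))$. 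Dividing by $\sqrt{\int_0^{z(t)}f(u)\,du}$ and integrating in $t$,
\[
F_U(z(t))=\int_{T^*}^t\frac{z'(s)\,ds}{\sqrt{\int_0^{z(s)}f(u)\,du}}=\sqrt{2C}\,t\,(1+o(1))
\]
by a Cesàro argument, so $F_U(z(t))/t\to\sqrt{2C}$. Applying this to $z_\pm$ and using that $F_U$ is increasing, $\sqrt{2(w(0)-\epsilon)}\le\liminf_{t\to\infty}F_U(x(t))/t\le\limsup_{t\to\infty}F_U(x(t))/t\le\sqrt{2(w(0)+\epsilon)}$, and letting $\epsilon\downarrow0$ completes the proof.

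The \textbf{main obstacle} is the systematic handling of the delayed term. It appears twice: first in passing from the Volterra equation to the localized two--sided estimate (showing that $\int_0^{t-\delta}w(t-s)f(x(s))\,ds$ is negligible), and, more essentially, in the energy identity for the comparison equation (showing $\int_{T^*}^t f(z(s-\delta))z'(s)\,ds$ is of lower order than $\int_{z(T^*)}^{z(t)}f$). Both reduce to establishing \emph{superexponential growth in time} of the solution concerned and then invoking that $f$ preserves superexponential growth; the delicate point is bootstrapping superexponential growth \emph{for all shifts $\mu>0$} out of $f(u)/u\to\infty$ using only a one--step iteration of the lower differential inequality. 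A secondary technical point is verifying that the upper comparison solution $z_+$ is genuinely global under \eqref{condition.infinite} rather than blowing up, so that the comparison $x\le z_+$ is available for all $t$.
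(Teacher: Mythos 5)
Your proposal is correct and follows essentially the same route as the paper's proof: superexponential growth of the solution in time, negligibility of the tail $\int_0^{t-\delta}w(t-s)f(x(s))\,ds$ via the $L^1$ bound and preservation of superexponential growth, two--sided comparison with bounded--delay equations $z'(t)=C\int_{t-\delta}^t f(z(s))\,ds$, the rate $F_U(z(t))/t\to\sqrt{2C}$ obtained from $z''(t)\approx Cf(z(t))$ (your energy identity is the paper's L'H\^{o}pital step of Lemma \ref{lemma.bounded_rate} in integrated form), and globality of the upper comparison from condition \eqref{condition.infinite} as in Lemma \ref{lemma.bounded_delay_no_explosion}. The only cosmetic differences are that the paper works throughout with an increasing $\phi\sim f$ (since \eqref{eq.f1_ch6} only makes $f$ asymptotically, not eventually, increasing) and starts its comparison solutions strictly below/above $x$ rather than with equal initial data, both routine adjustments to your sketch.
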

	The final result of this section shows that when $w(0) = 0$ and \eqref{condition.infinite} holds, solutions to \eqref{eq.volterra_ch6} do not  blow--up in finite--time. Furthermore, the rate of growth of solutions to \eqref{eq.volterra_ch6} must be strictly slower than the case when $w(0)>0$. More precisely, we assume
	\begin{align}\label{eq.w2}
	w \in C( [0,\infty);[0,\infty)), \quad w(0)=0, \quad w(t)> 0 \mbox{ for } t \in (0,\delta] \mbox{ for some }\delta>0.
	\end{align}
	\begin{theorem}\label{thm.w(0)=0}
		Suppose \eqref{eq.f1_ch6}, \eqref{eq.w_L^1}, and \eqref{eq.w2} hold. If \eqref{condition.infinite} holds, solutions to \eqref{eq.volterra_ch6} obey $x \in C([0,\infty);(0,\infty))$. If $f$ also preserves superexponential growth, then
		\begin{align}\label{eq.rate_slower}
		\lim_{t\to\infty}\frac{F_U(x(t))}{t} = 0.
		\end{align}
	\end{theorem}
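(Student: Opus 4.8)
The plan is to trap the solution $x$ between solutions of \eqref{eq.volterra_ch6} driven by kernels that are strictly positive at the origin, apply Theorems \ref{thm.sufficient_blow_up} and \ref{thm.non_expl_rate} to those comparison solutions, and then let the perturbation tend to zero; write $W(t)=\int_0^t w(s)\,ds$ and note $\mathcal{W}\ge \int_0^\delta w>0$ by \eqref{eq.w2}. First I would record the elementary a priori properties of $x$: since $w,f\ge 0$ we have $x'(t)=\int_0^t w(t-s)f(x(s))\,ds\ge 0$, so $x$ is nondecreasing with $x(t)\ge x(0)>0$ on its maximal interval of existence, and moreover $x$ cannot stay bounded, for if $x\le L$ then $f(x(s))\ge m:=\min_{[x(0),L]}f>0$ and hence $x'(t)\ge mW(\delta)>0$ for $t\ge\delta$, forcing $x$ to leave any bounded set.

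Next, for $\epsilon>0$ set $w_\epsilon(t)=w(t)+\epsilon e^{-t}$, so that $w_\epsilon$ satisfies \eqref{eq.w} (since $w_\epsilon(0)=\epsilon>0$ and $w_\epsilon\in C(\mathbb{R}^+;\mathbb{R}^+)$) and \eqref{eq.w_L^1} (with $\|w_\epsilon\|_{L^1}=\mathcal{W}+\epsilon$), and let $x_\epsilon$ solve \eqref{eq.volterra_ch6} with kernel $w_\epsilon$ and $x_\epsilon(0)=x(0)$. Using monotone dependence of the solution of \eqref{eq.volterra_ch6} on the kernel (a comparison principle; see the preliminary lemmas of Section \ref{proofs_prelim}) together with $0\le w\le w_\epsilon$ and \eqref{eq.f1_ch6}, one obtains $0<x(t)\le x_\epsilon(t)$ on the common interval of existence. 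Since \eqref{condition.infinite} is a hypothesis, Theorem \ref{thm.sufficient_blow_up} applied to the $w_1$-problem (take $\epsilon=1$) shows $x_1$ is global, so $x(t)\le x_1(t)<\infty$ for all $t\ge 0$ and thus $x\in C([0,\infty);[0,\infty))$; combined with the a priori bounds, $x\in C([0,\infty);(0,\infty))$, which is the first assertion of the theorem.

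For the growth rate, assume in addition that $f$ preserves superexponential growth. Then for every $\epsilon>0$ the data $(w_\epsilon,f)$ satisfy all hypotheses of Theorem \ref{thm.non_expl_rate} (note $w_\epsilon(0)=\epsilon$), hence $F_U(x_\epsilon(t))/t\to\sqrt{2\epsilon}$ as $t\to\infty$. Because $F_U$ is nondecreasing on $(0,\infty)$ and $x(0)<x(t)\le x_\epsilon(t)$, we get $F_U(x(t))\le F_U(x_\epsilon(t))$ and therefore $\limsup_{t\to\infty}F_U(x(t))/t\le\sqrt{2\epsilon}$; as $\epsilon>0$ is arbitrary, $\limsup_{t\to\infty}F_U(x(t))/t\le 0$. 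On the other hand, monotonicity of $F_U$ and $x(t)\ge x(0)$ give $F_U(x(t))/t\ge F_U(x(0))/t\to 0$, so the limit is $0$, which is \eqref{eq.rate_slower}. (For context: $x(t)\to\infty$ by the first paragraph and $F_U(x)\to\infty$ as $x\to\infty$ by \eqref{condition.infinite}, so $F_U(x(t))\to\infty$; the theorem asserts that this growth is strictly sublinear, in contrast with the exact linear rate $\sqrt{2w(0)}\,t$ obtained in Theorem \ref{thm.non_expl_rate} when $w(0)>0$.)

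I expect the only nonroutine point to be the kernel comparison $x\le x_\epsilon$. The subtlety is that $f$ need only be asymptotically increasing, so the naive sign argument may fail on the initial range of $x$; one handles this by choosing $t^\ast$ beyond which $f$ is increasing along both solutions, proving $x\le x_\epsilon$ on $[0,t^\ast]$ via a Gronwall estimate on the compact set $[x(0),\max(x(t^\ast),x_\epsilon(t^\ast))]$ using a local Lipschitz bound on $f$ (available under the standard well-posedness hypotheses, cf.\ \cite{GLS}), and then propagating the inequality to $[t^\ast,\infty)$ from $0\le w\le w_\epsilon$ and monotonicity of $f$ there. If such a comparison lemma is already recorded in Section \ref{proofs_prelim}, the argument above is complete essentially as written.
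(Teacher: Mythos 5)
Your overall strategy -- inflate the kernel to $w_\epsilon(t)=w(t)+\epsilon e^{-t}$, apply Theorems \ref{thm.sufficient_blow_up} and \ref{thm.non_expl_rate} to the inflated problem, and squeeze as $\epsilon\to 0^+$ -- is sound in outline and genuinely different from the route the paper has in mind (the proof is omitted there as a ``minor variation'': essentially one reruns the limsup half of the proof of Theorem \ref{thm.non_expl_rate}, in which only $\bar{w}(\delta)=\sup_{s\in[0,\delta]}w(s)$ enters, and lets $\delta\to 0^+$ so that $\bar{w}(\delta)\to w(0)=0$). However, the step you yourself single out as the only nonroutine one -- the ordering $x\le x_\epsilon$ -- is exactly where the proposal has a genuine gap. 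No kernel--monotonicity comparison lemma is recorded in Section \ref{proofs_prelim} (the lemmas there concern the bounded--delay equations \eqref{eq.bounded_delay_kernel} and \eqref{eq.bounded_delay}), and your proposed substitute does not close it: a Gronwall/local--Lipschitz estimate on $[0,t^*]$ controls $|x-x_\epsilon|$, i.e.\ proximity, and cannot yield the sign information $x\le x_\epsilon$. Moreover, with \emph{equal} initial values and $f$ only asymptotically increasing, the first--crossing argument really does break down on the initial range: writing $D=x_\epsilon-x$, one has $D'(t)=\int_0^t(w_\epsilon-w)(t-s)f(x_\epsilon(s))\,ds+\int_0^t w(t-s)\{f(x_\epsilon(s))-f(x(s))\}\,ds$, and where $f$ decreases the second term is negative and can dominate the first, which is only of order $\epsilon$; and since the equation carries its full memory, a failure of the ordering on $[0,t^*]$ cannot simply be ``propagated away'' on $[t^*,\infty)$, because at any later crossing time the derivative comparison still involves the whole history, including the stretch where $f$ is not monotone.

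The gap is repairable with the paper's own toolkit, after which your squeeze goes through with a harmless constant. Choose $\phi$ increasing, continuous, $\phi\sim f$, and $\kappa>0$ with $f(x)\le\kappa\phi(x)=:\phi_\kappa(x)$ for all $x>0$ (as in the necessity part of the proof of Theorem \ref{thm.sufficient_blow_up}), and let $z_\epsilon$ solve \eqref{eq.volterra_ch6} with kernel $w_\epsilon$, nonlinearity $\phi_\kappa$, and initial value $x(0)+1$. A time--of--first--breakdown argument is now legitimate, since $\phi_\kappa$ is increasing, $w\le w_\epsilon$, and the initial data are strictly separated; it gives $x(t)\le z_\epsilon(t)$ for all $t$ in the common interval of existence. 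The pair $(w_\epsilon,\phi_\kappa)$ satisfies \eqref{eq.w}, \eqref{eq.w_L^1}, \eqref{eq.f1_ch6} and \eqref{condition.infinite}, and $\phi_\kappa$ preserves superexponential growth because $\phi\sim f$ and preservation is an asymptotic property; so Theorem \ref{thm.sufficient_blow_up} makes $z_\epsilon$ global (settling the first assertion, e.g.\ with $\epsilon=1$) and Theorem \ref{thm.non_expl_rate} gives $F_U^{\phi_\kappa}(z_\epsilon(t))/t\to\sqrt{2\epsilon}$, where $F_U^{\phi_\kappa}$ is built from $\phi_\kappa$. Since $\int_0^u\phi_\kappa(s)\,ds\sim\kappa\int_0^u f(s)\,ds$, one has $F_U^{\phi_\kappa}\sim\kappa^{-1/2}F_U$, whence $\limsup_{t\to\infty}F_U(x(t))/t\le\sqrt{2\kappa\epsilon}$ for each $\epsilon>0$, and letting $\epsilon\to0^+$ (with $\kappa$ fixed) yields \eqref{eq.rate_slower}, the lower estimate being trivial as you observe. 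So the idea is salvageable, but as written the central comparison is unproved and the Gronwall repair would fail.
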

	The proof of Theorem \ref{thm.w(0)=0} is a minor variation on arguments used throughout this paper and is hence omitted.
	\section{Extensions to Perturbed Equations}\label{sec.ext_to_pert}
	We now consider the case when a nonautonomous forcing term is added to \eqref{eq.volterra_ch6}, i.e.
	\begin{equation}\label{eq.volterra_forced}
	x'(t) = h(t) + \int_0^t w(t-s)f(x(s))\,ds, \quad t \geq 0; \quad x(0) = \psi>0,
	\end{equation}
	and demonstrate that the results of Section \ref{sec_main_results_blow_up} are preserved under ``small'' perturbations. We do not require $h$ to be nonnegative and hence solutions to \eqref{eq.volterra_forced} are no longer necessarily monotone; due to the nature of our comparison arguments this relaxation does not present any additional difficulties. Suppose that the forcing term, $h$, obeys
	\begin{align}\label{eq.H}
	h \in C(\mathbb{R};\mathbb{R}), \quad H(t) := \int_0^t h(s)\,ds \geq 0\quad \mbox{ for each }t \geq 0.
	\end{align}
	Results regarding the finite--time blow--up of solutions require no additional hypotheses. However, for results regarding rates of growth we ask that the nonlinearity obeys
	\begin{equation}\label{eq.f2_ch6}
	f \in C((0,\infty);(0,\infty)), \quad f \mbox{ is increasing}, \quad \lim_{x\to\infty}\frac{f(x)}{x}=\infty,
	\end{equation}
	in order to simplify and shorten the proofs.
	
	Our first result regarding solutions to the forced Volterra equation \eqref{eq.volterra_forced} shows that the blow--up condition and rate of explosion are unchanged by forcing terms obeying \eqref{eq.H}.
	\begin{theorem}\label{thm.expl_rate_perturbed}
		Suppose \eqref{eq.f1_ch6}, \eqref{eq.w}, and \eqref{eq.H} hold. If \eqref{condition.finite} holds, then solutions to \eqref{eq.volterra_forced} blow--up in finite--time. If we further suppose that $f$ preserves superexponential growth and \eqref{eq.f2_ch6} holds, then solutions to \eqref{eq.volterra_forced} obey
		\begin{equation}\label{eq.blow_up_rate_preserved}
		\lim_{t\to T^-}\frac{F_B(x(t))}{T-t} = \sqrt{2 w(0)},
		\end{equation}
		where $T$ denotes the blow--up time.
	\end{theorem}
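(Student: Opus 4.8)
The plan is to derive the perturbed statement from the $H\equiv0$ theory, with the bounded--delay equation \eqref{eq.bounded_discuss} as the bridge. First I would dispose of blow--up: integrating \eqref{eq.volterra_forced} gives $x(t)=\psi+H(t)+\int_0^t W(t-s)f(x(s))\,ds$ with $W(t)=\int_0^t w(s)\,ds$, and since $H\ge0$ and the convolution is nonnegative we have $x(t)\ge\psi>0$ for all $t$, so the solution remains positive (hence $f(x(\cdot))$ is well defined) and $x(t)\ge\psi+\int_0^t W(t-s)f(x(s))\,ds$. The comparison arguments used to prove Theorem \ref{thm.sufficient_blow_up} then show $x$ dominates the solution of the unforced equation \eqref{eq.volterra_ch6} with the same initial value; by Theorem \ref{thm.sufficient_blow_up} the latter explodes at a finite time once \eqref{condition.finite} holds, and therefore so does $x$, at some $T<\infty$.

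For the rate I would work near $T$. From \eqref{eq.volterra_forced} we have $x\in C^1([0,T))$ and $f(x(t))\to\infty$ as $t\to T^-$. Fix $\epsilon>0$ and choose $\delta\in(0,T)$ with $w(0)-\epsilon\le w\le w(0)+\epsilon$ on $[0,\delta]$. In the lower estimate I would discard the portion of the convolution over $[0,t-\delta]$ and absorb the forcing $h$ (bounded on $[0,T]$) into the divergent term $\int_{t-\delta}^t f(x(s))\,ds$; in the upper estimate I would keep $\int_0^{t-\delta}w(t-s)f(x(s))\,ds$, which is bounded for $t$ close to $T$ because $x$ is bounded on $[0,T-\delta/2]$ and $w$ is bounded on compact sets, and absorb it together with $h$. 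This should yield, on some $[T^*,T)$,
\[
(w(0)-\epsilon)\,\phi(t)\le x'(t)\le (w(0)+\epsilon)\,\phi(t),\qquad \phi(t):=\int_{t-\delta}^t f(x(s))\,ds,
\]
so that $x$ is squeezed between a subsolution and a supersolution of bounded--delay equations of the type \eqref{eq.bounded_discuss} with $C=w(0)\pm\epsilon$. In particular $x'>0$ near $T$ (so $x$ is eventually increasing and $f(x(s))\le f(x(t))$ for $s\le t$ by \eqref{eq.f2_ch6}), $\phi\in C^1$ with $\phi'(t)=f(x(t))-f(x(t-\delta))$, and $\phi(t)\to\infty$.

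Next I would run the ``energy'' computation suggested by \eqref{eq.2nd_order_discuss}. Since $t-\delta$ stays bounded away from $T$, the delayed value $f(x(t-\delta))$ stays bounded and is therefore negligible against $f(x(t))$ (this negligibility is where preservation of superexponential growth enters in the general setting), so $\phi'(t)=f(x(t))\,(1+o(1))$. Writing $\Lambda(t):=\int_0^{x(t)}f(s)\,ds$ one has $\Lambda'(t)=f(x(t))\,x'(t)$, and for $G:=\tfrac12\phi^2$,
\[
\frac{G'(t)}{\Lambda'(t)}=\frac{\phi(t)}{x'(t)}\,(1+o(1))\in\Big[\tfrac{1-o(1)}{w(0)+\epsilon},\ \tfrac{1+o(1)}{w(0)-\epsilon}\Big].
\]
Integrating on $[T^{**},t]$ with $T^{**}<T$ large, using $G(t),\Lambda(t)\to\infty$, and then letting $\epsilon\downarrow0$ forces $\phi(t)^2/\Lambda(t)\to 2/w(0)$; together with $x'(t)/\phi(t)\to w(0)$ (from the two--sided inequality as $\epsilon\downarrow0$) this gives $x'(t)/\sqrt{\Lambda(t)}\to\sqrt{2w(0)}$. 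Since $\tfrac{d}{dt}F_B(x(t))=-x'(t)\big/\sqrt{\Lambda(t)}$ and $F_B(x(t))\to F_B(\infty)=0$ as $t\to T^-$ (finite by \eqref{condition.finite}), integrating from $t$ to $T$ gives $F_B(x(t))=(\sqrt{2w(0)}+o(1))(T-t)$, i.e.\ \eqref{eq.blow_up_rate_preserved}.

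The step I expect to be the main obstacle is the upper half of the two--sided differential inequality: one must show quantitatively that the forcing $h$ and the ``old'' part $\int_0^{t-\delta}w(t-s)f(x(s))\,ds$ of the convolution are of strictly lower order than $\int_{t-\delta}^t f(x(s))\,ds$ as $t\to T^-$, which rests on $x$ being bounded on $[0,T-\delta/2]$ and on $f(x(t))\to\infty$. Once this is secured the remaining work is the routine passage $\epsilon\downarrow0$; an alternative packaging avoids the $\phi$-computation altogether by sandwiching $x$ between solutions of \eqref{eq.bounded_discuss} with $C=w(0)\pm\epsilon$ and quoting the asymptotics for \eqref{eq.bounded_discuss} obtained en route to Theorem \ref{thm.expl_rate}, at the price of having to reconcile the blow--up times of the comparison equations with $T$ as $\epsilon\downarrow0$.
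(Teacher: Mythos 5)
Your proposal is correct and follows essentially the same route as the paper: blow--up is obtained by a comparison from below that reduces to the unforced criterion (the paper implements this with a delayed comparison equation started from $x(0)/2$ and an increasing minorant of $f$, choosing $\tau$ so that $\int_0^\tau h(s)\,ds\le x(0)/4$ because $h$ need not be pointwise nonnegative --- the same devices you would need to make your ``same initial value'' comparison rigorous, since \eqref{eq.f2_ch6} is not assumed in that part), and the rate follows from a two--sided estimate near $T$ in which $h$ and the tail $\int_0^{t-\delta}w(t-s)f(x(s))\,ds$ are absorbed as bounded terms, giving $x'(t)\sim\sqrt{2w(0)\int_0^{x(t)}f(s)\,ds}$ and then \eqref{eq.blow_up_rate_preserved} upon integrating $F_B$. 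Your energy computation with $G=\tfrac12\phi^2$ is merely a repackaging of the paper's repeated application of L'H\^opital's rule following \eqref{limit.first_ode}, and your observation that the delayed terms are negligible simply because $x(t-\delta)$ stays bounded as $t\to T^-$ is exactly how the paper argues.
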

	Previously we assumed that $f$ preserves superexponential growth when proving results regarding the rate of growth of solutions; henceforth we replace this hypothesis with the assumption that 
	\begin{equation}\label{eq.eventually_increasing}
	x \mapsto f(x)/x \quad\mbox{is eventually increasing}.
	\end{equation} 
	By Proposition \ref{LEMMA.PRESERVES}, $f$ preserves superexponential growth when \eqref{eq.eventually_increasing} holds. As we show presently, the stronger hypothesis \eqref{eq.eventually_increasing} allows us to characterise the perturbation terms which preserve the rate of growth when $h \equiv 0$, i.e. the asymptotic relation \eqref{eq.unbounded_rate_unperturbed} still holds, in the non--explosive case. Our next result also shows that our blow--up conditions remain necessary if the nonautonomous forcing term is sufficiently small in an appropriate sense.
	\begin{theorem}\label{thm.non_expl_rate_perturbed}
		Suppose \eqref{eq.f1_ch6}, \eqref{eq.w}, and \eqref{eq.H} hold. If \eqref{condition.infinite} holds, then solutions to \eqref{eq.volterra_forced} obey $x \in C([0,\infty);(0,\infty))$. If we further suppose $x \mapsto f(x)/x$ is eventually increasing, \eqref{eq.f2_ch6} holds, and $w$ obeys \eqref{eq.w_L^1}, then the following are equivalent:
			\[	(i.)\quad	\limsup_{t\to\infty}\frac{F_U(H(t))}{t} \leq \sqrt{2 w(0)}, \quad (ii.)\quad
			\lim_{t\to\infty}\frac{F_U(x(t))}{t} = \sqrt{2 w(0)}.
			\]
	\end{theorem}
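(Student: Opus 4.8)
The statement has two parts, and I would treat them in turn. That $x$ is global under \eqref{condition.infinite} is a minor variant of arguments already used in the paper for the unforced equation: $h\in C(\mathbb{R};\mathbb{R})$ makes $H$ locally bounded, and replacing $h$ by its positive part only enlarges the solution, so it suffices to rule out blow--up for a nonnegative forcing, which one does by dominating $x$ from above by a solution of a bounded--delay comparison equation \eqref{eq.bounded_discuss} (or, when $f$ is increasing, by the ``only if'' half of Theorem \ref{brunner}, since \eqref{condition.infinite} negates \eqref{eq.BY_condition} with $\beta=1$ and $G(s,u)=f(u)$); solutions of \eqref{eq.bounded_discuss} are global precisely when \eqref{condition.infinite} holds, via the second--order identity \eqref{eq.2nd_order_discuss} and the energy estimate $\tfrac12(z')^2\sim CF(z)$ with $F(u):=\int_0^uf(s)\,ds$. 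Integrating \eqref{eq.volterra_forced} gives $x(t)=\psi+H(t)+\int_0^tW(t-s)f(x(s))\,ds$ with $W(t)=\int_0^tw(s)\,ds$, and then $H,W\ge0$, $f>0$ yield $x(t)\ge\psi>0$.

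The implication $(ii.)\Rightarrow(i.)$ is immediate from this integrated form: $x(t)\ge H(t)$, so monotonicity of $F_U$ gives $F_U(x(t))\ge F_U(H(t))$, hence $\limsup_{t\to\infty}F_U(H(t))/t\le\lim_{t\to\infty}F_U(x(t))/t=\sqrt{2w(0)}$. For the converse I would sandwich $x$. The lower bound uses nothing from $(i.)$: discarding $H\ge0$ from the integrated equation and comparing with the unforced problem started from $\tilde x(0)=\psi$ gives $x(t)\ge\tilde x(t)$ by the Volterra comparison principle (using \eqref{eq.f2_ch6}), and since \eqref{eq.eventually_increasing} makes $f$ preserve superexponential growth (Proposition \ref{LEMMA.PRESERVES}(i.)) while \eqref{condition.infinite}, \eqref{eq.w} and \eqref{eq.w_L^1} hold, Theorem \ref{thm.non_expl_rate} gives $F_U(\tilde x(t))/t\to\sqrt{2w(0)}$, so $\liminf_{t\to\infty}F_U(x(t))/t\ge\sqrt{2w(0)}$.

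For the matching upper bound, fix $\varepsilon>0$, put $g=(1+\varepsilon)f$, and for $b>0$ let $\bar x_b$ solve \eqref{eq.volterra_ch6} with nonlinearity $g$ and $\bar x_b(0)=b$. Then $g$ obeys \eqref{eq.f1_ch6}, preserves superexponential growth and satisfies \eqref{condition.infinite}; since $\int_0^u g=(1+\varepsilon)\int_0^u f$, the function $F_U$ associated with $g$ equals $(1+\varepsilon)^{-1/2}F_U$, so Theorem \ref{thm.non_expl_rate} yields $F_U(\bar x_b(t))/t\to\sqrt{2(1+\varepsilon)w(0)}$ for every $b$, a rate strictly above $\sqrt{2w(0)}$. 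The crux is that $(i.)$ forces $H(t)=o(\bar x_b(t))$: one has $F_U(\bar x_b(t))-F_U(H(t))\ge c_\varepsilon t\to\infty$ for some $c_\varepsilon>0$, whereas $f(u)/u\to\infty$ gives $F(u)\ge\tfrac12 uf(u/2)$, hence $F(u)/u^2\to\infty$ and $\int_{\rho y}^{y}du/\sqrt{F(u)}\le(1-\rho)y/\sqrt{F(\rho y)}\to0$ as $y\to\infty$ for every fixed $\rho\in(0,1)$; so $H(t)\ge\rho\bar x_b(t)$ along a sequence tending to $\infty$ would contradict that divergence. Now fix $b_0=(1+\varepsilon)\psi$ and, using $\rho=\tfrac{\varepsilon}{2(1+\varepsilon)}$, pick $T_0$ with $H(t)\le\tfrac{\varepsilon}{2(1+\varepsilon)}\bar x_{b_0}(t)$ for $t\ge T_0$. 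Monotonicity of $b\mapsto\bar x_b$ and the identity $\bar x_b(t)=b+(1+\varepsilon)\int_0^tW(t-s)f(\bar x_b(s))\,ds$ then give, for every $b\ge b_0$ and every $t\ge T_0$, $\psi+H(t)+\int_0^tW(t-s)f(\bar x_b(s))\,ds=\bar x_b(t)-\bigl(\tfrac{\varepsilon}{1+\varepsilon}\bar x_b(t)+\tfrac{b}{1+\varepsilon}-\psi-H(t)\bigr)<\bar x_b(t)$, i.e. $\bar x_b$ is a strict supersolution of the integrated forced equation on $[T_0,\infty)$. Taking $b=\max\bigl((1+\varepsilon)\psi,\ \max_{[0,T_0]}x\bigr)$ makes $x\le b\le\bar x_b$ on $[0,T_0]$, so the Volterra comparison principle gives $x(t)\le\bar x_b(t)$ for $t\ge T_0$ and therefore $\limsup_{t\to\infty}F_U(x(t))/t\le\sqrt{2(1+\varepsilon)w(0)}$; letting $\varepsilon\downarrow0$ and combining with the lower bound yields $(ii.)$.

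The step I expect to be hardest is precisely the inference $H(t)=o(\bar x_b(t))$ from $(i.)$ together with the ordering of the free parameters ($\varepsilon$, then $T_0$, then $b$) needed to have the strict supersolution inequality hold on a full tail while the seed inequality $x\le\bar x_b$ on $[0,T_0]$ is arranged simultaneously: since $h$ is unsigned and $H$ is controlled only on average, neither $x$ nor $\bar x_b$ is monotone a priori, and the bounded contributions of the data to each have to be tracked carefully. Everything else is a reprise of the energy and bounded--delay comparison arguments already developed for Theorems \ref{thm.non_expl_rate} and \ref{thm.expl_rate_perturbed}.
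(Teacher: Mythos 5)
Your proposal is correct in substance, and for the hard implication $(i.)\Rightarrow(ii.)$ it takes a genuinely different route from the paper. The paper majorises $H$ by $\bar H(t)=F_U^{-1}((1+\delta)Kt)$, feeds $\bar H$ into upper comparison solutions $x_+$ and $x_u$, differentiates twice, and uses Lemmas \ref{lemma.ode_auxiliary} and \ref{lemma.f_preserves_zero} to show that the forcing contributes negligibly to $x_u''$ relative to $f(x_u)$, after which an energy estimate and asymptotic integration give $\limsup_{t\to\infty}F_U(x(t))/t\le\sqrt{2w(0)}$; the borderline case $K=\sqrt{2w(0)}$ has to be treated separately. You instead inflate the nonlinearity to $(1+\varepsilon)f$ in the \emph{unforced} equation, invoke Theorem \ref{thm.non_expl_rate} to give that comparison solution the strictly faster rate $\sqrt{2(1+\varepsilon)w(0)}$, and exploit the elementary but decisive observation that $\bar F(u)/u^2\to\infty$ forces $F_U(y)-F_U(\rho y)\to 0$, so that hypothesis $(i.)$ yields $H(t)\le\rho\,\bar x_{b_0}(t)$ eventually; the forcing and initial data are then absorbed into the surplus $\varepsilon\int_0^t W(t-s)f(\bar x_b(s))\,ds$, making $\bar x_b$ a strict supersolution of the integrated forced equation, and a first-crossing comparison plus $\varepsilon\downarrow 0$ finishes. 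Your route avoids the second differentiation and the auxiliary ODE lemmas entirely and handles $K=\sqrt{2w(0)}$ uniformly, at the price of requiring the monotonicity \eqref{eq.f2_ch6} throughout the comparison (which the theorem supplies anyway); the paper's heavier machinery tracks the actual profile of $H$, which is what one would need in the conjectured regime $\limsup_{t\to\infty}F_U(H(t))/t>\sqrt{2w(0)}$. Two cosmetic points: in your liminf bound, start the unforced comparison solution strictly below $\psi$ (the paper uses $x(0)/2$) so the first-crossing argument is clean without appealing to uniqueness; and in the global-existence part, the step ``replacing $h$ by $h^+$ only enlarges the solution'' needs an increasing majorant of $f$ since only asymptotic monotonicity holds there --- the paper's simpler device of bounding $|h|$ on $[0,T]$ and absorbing it into the delay-equation comparison is the cleaner patch. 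The rest of your argument (global existence, the liminf via Theorem \ref{thm.non_expl_rate}, and $(ii.)\Rightarrow(i.)$ from $x(t)\ge H(t)$) coincides with the paper's.
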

	It is evidently of interest to study the case when $\limsup_{t\to\infty}F_U(H(t))/t > \sqrt{2 w(0)}$ and we conjecture that the perturbation likely dominates the dynamics of the system in this case. The results of \cite{appleby2017growth} provide a road map as to how this issue could be addressed.
	\section{Examples}\label{sec_examples}
	Since our results are insensitive to the structure of the memory, the examples which follow do not require a functional form for $w$ (so long as continuity and integrability assumptions hold). For example, with $\omega > 0$ arbitrary, the following kernels would be admissible:
	\begin{align*}
	w_1(t) = \omega (1+t)^{-\alpha}, \,\, \alpha \geq 0; \quad 
	w_2(t) = \omega \exp(-t^\gamma),\,\,\gamma>0; \quad
	w_3(t) = \omega/\Gamma(t+1),\,\,\gamma>0.
	\end{align*}
	where $\Gamma$ denotes the Gamma function.
	\begin{example}\label{eg.blow_up_1}
		Suppose $f(x) = (1+x)^\beta$ for all $x>0$ and for some $\beta>1$. Choose any $w$ obeying \eqref{eq.w}. Note that this choice of $f$ obeys \eqref{eq.f1_ch6} and also preserves superexponential growth; to see this check any of $(i.-iii.)$ in Proposition \ref{LEMMA.PRESERVES}. We first check condition \eqref{condition.finite} to determine whether or not solutions to \eqref{eq.volterra_ch6} blow--up in finite--time. First note that
		\[
		\sqrt{\int_0^u f(s)\,ds} = \left( \int_0^u (1+s)^\beta \,ds \right)^{1/2} = \left( \frac{(u+1)^{\beta+1} - 1}{\beta+1} \right)^{1/2}, \quad u \geq 0. 
		\]
		For $\eta>0$ arbitrary and $N>0$ sufficiently large, we have
		\[
		\int_{\eta}^N \frac{du}{\sqrt{\int_0^u f(s)\,ds}} = \sqrt{\beta+1}\,\int_\eta^N \left( (u+1)^{\beta+1} - 1 \right)^{-1/2} \,du. 
		\]
		As $u \to \infty$, $\left( (u+1)^{\beta+1} - 1 \right)^{-1/2} \sim u^{-(\beta+1)/2}$ and \eqref{condition.finite} holds since
		\[
		\int_\eta^\infty u^{-(\beta+1)/2} \, du = \frac{2 \eta^{(1-\beta)/2}}{\beta-1} < \infty, \quad\mbox{ for each }\eta >0 \mbox{ and } \beta > 1.
		\]
		Therefore, by Theorem \ref{thm.sufficient_blow_up}, solutions to \eqref{eq.volterra_ch6} blow--up for every $w$ obeying \eqref{eq.w}. It can be shown that
		\[
		F_B(x) \sim \frac{2(\beta-1)}{\sqrt{\beta+1}}\, x^{(1-\beta)/2}, \quad\mbox{ as }x\to \infty.
		\]
		Thus, by Theorem \ref{thm.expl_rate}, solutions to \eqref{eq.volterra_ch6} obey
		\begin{equation}\label{eq.expls_example}
		\lim_{t \to T^-} \frac{x(t)^{(1-\beta)/2}}{T-t} = \frac{1}{\beta-1}\sqrt{\frac{(\beta+1)w(0)}{2}},\quad \beta>1, \quad w(0)>0.
		\end{equation}
		Furthermore, solutions to \eqref{eq.volterra_forced} will still obey \eqref{eq.expls_example} for any perturbation $h$ obeying \eqref{eq.H}.
		
		In this example one may ``invert'' the asymptotic relation \eqref{eq.expls_example} to obtain the leading order behaviour of the solution at blow--up. In other words, \eqref{eq.expls_example} can be improved to
		\[
		x(t) \sim \left( \frac{1}{\beta-1}\sqrt{\frac{(\beta+1)w(0)}{2}} \right)^{2/(1-\beta)}(T-t)^{2/(1-\beta)}, \quad\mbox{ as } t \to T^-.
		\]
	\end{example}
	\begin{example}\label{eg_nonexplosive_2}
		Suppose $f(x) = (x+e)\log(x+e)$ for $x>0$ and let $w$ obey \eqref{eq.w}. Once again, it is straightforward to verify that $f$ satisfies \eqref{eq.f1_ch6} and preserves superexponential growth. Moreover, $x \mapsto f(x)/x = (x+e)\log(x+e)/x$ is eventually increasing.
		
		We first check condition \eqref{condition.finite} to see if solutions to \eqref{eq.volterra_ch6} blow--up in finite--time. Direct computation shows that
		\[
		\int_\eta^N \frac{du}{\sqrt{\int_0^u f(s)\,ds}} = 2 \int_{\eta}^N \frac{du}{\sqrt{(u+e)^2 \left(2\log(u+e)-1\right) -e^2 } }, \quad N> \eta>0.
		\]
		As $u \to \infty$,
		\[
		\sqrt{(u+e)^2 \left(2\log(u+e)-1\right) -e^2 } \sim u\sqrt{2\log(u)}.
		\]
		Thus \eqref{condition.finite} does not hold because
		\[
		\int_\eta^N \frac{du}{u\sqrt{2\log(u)}} = \sqrt{2}\left( \sqrt{\log(N)} - \sqrt{\log(\eta)} \right) \to \infty, \quad\mbox{ as } N \to \infty.
		\]
		Therefore, by Theorem \ref{thm.sufficient_blow_up}, solutions to \eqref{eq.volterra_ch6} are global if $w$ obeys \eqref{eq.w}. Furthermore,
		\[
		F_U(x) \sim 2\sqrt{2 \log(x)}, \quad\mbox{ as }x \to \infty
		\]
		and thus, by Theorem \ref{thm.non_expl_rate}, solutions to \eqref{eq.volterra_ch6} obey
		\begin{equation}\label{eq.eg_growth_rate}
		\lim_{t\to \infty} \frac{\log(x(t))^{1/2}}{t} = \frac{\sqrt{w(0)}}{2}.
		\end{equation}
		Equation \eqref{eq.eg_growth_rate} is of course equivalent to saying that $\log(x(t)) \sim w(0)t^2 /4$ as $t\to\infty$.
		
		Now we consider the effect of forcing terms on the asymptotic growth rate captured by \eqref{eq.eg_growth_rate}. Firstly suppose $h$ obeys \eqref{eq.H} and $H(t) \sim t^\alpha$ as $t \to \infty$, for some $\alpha>0$. Then
		\[
		\limsup_{t\to\infty}\frac{F_U(H(t))}{t} = \limsup_{t\to\infty}\frac{2\sqrt{2 \log(t^\alpha)}}{t} = 0, \quad \alpha > 0.
		\] 
		Hence, by Theorem \ref{thm.non_expl_rate_perturbed}, solutions to \eqref{eq.volterra_forced} still obey \eqref{eq.eg_growth_rate} for any perturbation tending to infinity no faster than a power.
	\end{example}
	\section{Preliminary Results and Lemmas}\label{proofs_prelim}
	We first characterise the behaviour of solutions of two auxiliary equations, namely
	\begin{align}\label{eq.bounded_delay_kernel}
	y'(t) &= \int_{t-\delta}^t w(t-s) f(y(s))\,ds, \quad t \geq 0;\quad 
	y(t) = \psi(t), \quad t \in [-\delta,0],
	\end{align}
	and
	\begin{align}\label{eq.bounded_delay}
	z'(t) &= C\,\int_{t-\delta}^t f(z(s))\,ds, \quad t \geq 0;\quad 
	z(t) = \psi(t), \quad t \in [-\delta,0],
	\end{align}
	for some $C>0$ and $\delta>0$. We often use solutions to equations of the form \eqref{eq.bounded_delay_kernel} and \eqref{eq.bounded_delay} as comparison solutions for the more complex Volterra equations \eqref{eq.volterra_ch6} and \eqref{eq.volterra_forced}. The hypotheses on the nonlinearity are as before and the initial function, denoted by $\psi$, is assumed positive throughout, i.e.
	\begin{equation}\label{eq.psi}
	\psi \in C([-\delta,0];(0,\infty)).
	\end{equation}
	The function $\bar{F}$ given by 
	\begin{equation}\label{def.bigF}
	\bar{F}(x) = \int_{0}^x f(s)\,ds, \quad \mbox{for each }x > 0,
	\end{equation}
	appears frequently and inherits useful properties from $f$, as noted in the following corollary.
	\begin{corollary}\label{corollary.convex}
		If \eqref{eq.f1_ch6} holds, then $\bar{F}$ preserves superexponential growth.
	\end{corollary}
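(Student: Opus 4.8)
\emph{Proof proposal.} First note that $\bar F$ is a legitimate candidate for the notion of ``preserving superexponential growth'': since $f\in C((0,\infty);(0,\infty))$, $\bar F$ is $C^1$ on $(0,\infty)$ with $\bar F'=f>0$, and $\bar F$ is finite--valued (as used throughout the paper), so $\bar F\in C((0,\infty);(0,\infty))$. The plan is to reduce Definition~\ref{def.preserves_super_exp} to a statement purely about $\bar F$: namely, it suffices to show that whenever $a,b$ are positive quantities with $a,b\to\infty$ and $a/b\to 0$, one has $\bar F(a)/\bar F(b)\to 0$. Indeed, if $g$ exhibits superexponential growth and $\epsilon>0$, then setting $a=g(x-\epsilon)$ and $b=g(x)$ we have $b\to\infty$, $a\to\infty$ (as $x\to\infty$), and $a/b=g(x-\epsilon)/g(x)\to 0$ by Definition~\ref{defn.super_exp}; the claimed implication then gives $\bar F(g(x-\epsilon))/\bar F(g(x))\to 0$, which is exactly what is required.

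To prove the implication I would use only the asymptotic monotonicity in \eqref{eq.f1_ch6}, in the form: there exist $X>0$ and a constant $K\ge 1$ such that $f(s)\le K f(t)$ whenever $X\le s\le t$ (this follows from any reasonable reading of ``asymptotically increasing''); taking $s=X$ this also yields $f(t)\ge f(X)/K=:c_0>0$ for all $t\ge X$. Write $C_0:=\int_0^X f(s)\,ds<\infty$. Now suppose $a,b\to\infty$ with $a/b\to 0$; then eventually $a\ge X$ and $a<b$. For such $a,b$, on the one hand
\[
\bar F(a)=\int_0^X f(s)\,ds+\int_X^a f(s)\,ds\le C_0+K a f(a),
\]
and on the other hand, since $f(s)\ge f(a)/K$ for $s\in[a,b]$,
\[
\bar F(b)\ge\int_a^b f(s)\,ds\ge \frac{1}{K}\,(b-a)\,f(a).
\]
Dividing gives $\bar F(a)/\bar F(b)\le \dfrac{K C_0}{(b-a)f(a)}+\dfrac{K^2 a}{b-a}$.

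It remains to see that both terms tend to $0$. Since $a/b\to 0$ and $b\to\infty$, we have $b-a=b(1-a/b)\to\infty$, while $f(a)\ge c_0>0$, so $(b-a)f(a)\ge c_0(b-a)\to\infty$ and the first term vanishes; the second term equals $K^2(a/b)/(1-a/b)\to 0$. This completes the reduction and hence the corollary. The only (minor) obstacle is that ``asymptotically increasing'' controls $f$ only near infinity, so $f$ can be large near $0$; this is handled harmlessly by splitting $\bar F$ at the threshold $X$ and absorbing the finite piece into the constant $C_0$. As an alternative one could note that asymptotic monotonicity of $f$ forces $\bar F\sim\Phi$ for a genuinely increasing, convex $\Phi$ (an antiderivative of an increasing representative of $f$), and then invoke Proposition~\ref{LEMMA.PRESERVES}(ii) together with the remark following Definition~\ref{def.preserves_super_exp}; but the direct estimate above avoids that extra bookkeeping.
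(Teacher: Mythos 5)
Your proof is correct, but it takes a more hands-on route than the paper. The paper disposes of the corollary in one line: $\bar F$ is the integral of a positive, (asymptotically) increasing function, hence increasing and convex, so Proposition~\ref{LEMMA.PRESERVES}$(ii)$ applies --- essentially the ``alternative'' you mention at the end, where one passes to an increasing representative $\phi\sim f$, notes $\bar F\sim\Phi$ with $\Phi$ convex, and uses the remark after Definition~\ref{def.preserves_super_exp} to transfer the property along asymptotic equivalence. You instead prove directly that $\bar F(a)/\bar F(b)\to 0$ whenever $a,b\to\infty$ with $a/b\to 0$, using only the quasi-monotonicity $f(s)\le Kf(t)$ for $X\le s\le t$ (which is indeed what ``asymptotically increasing'' delivers, since $f\sim\phi$ with $\phi$ increasing), and your estimates $\bar F(a)\le C_0+Kaf(a)$, $\bar F(b)\ge (b-a)f(a)/K$ and the passage $b-a=b(1-a/b)\to\infty$ are all sound; the reduction to Definition~\ref{def.preserves_super_exp} is also valid because $g(x-\epsilon)\to\infty$ under Definition~\ref{defn.super_exp}. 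What your approach buys is self-containedness and a slightly stronger conclusion (the ratio property for arbitrary divergent $a,b$ with $a/b\to 0$, not only along superexponential $g$), and it deals explicitly with the fact that $f$ need not be genuinely increasing near the origin --- a point the paper's one-liner glosses over; what the paper's route buys is brevity and reuse of Proposition~\ref{LEMMA.PRESERVES}, which is already in place. Your parenthetical that $\bar F$ is finite-valued is the same implicit standing assumption the paper makes in \eqref{condition.finite} and \eqref{defn.F_blow_up}, so no issue there.
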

	Corollary \ref{corollary.convex} follows directly from Proposition \ref{LEMMA.PRESERVES} by noting that $\bar{F}$ is the integral of a positive and increasing function, and thus is both increasing and convex itself.
	\begin{lemma}\label{lemma.bounded_delay_1}
		Let $C>0$ and $\delta>0$, and suppose that \eqref{eq.f1_ch6} and \eqref{eq.psi} hold. If a solution to \eqref{eq.bounded_delay} obeys $z \in C([-\delta,\infty);(0,\infty))$, then $z$ exhibits superexponential growth.
	\end{lemma}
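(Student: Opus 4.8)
The plan is to establish, in order: (a) that $z$ is strictly increasing; (b) that $z(t)\to\infty$ as $t\to\infty$; and (c) that for every $\epsilon>0$ one has $z(t-\epsilon)/z(t)\to 0$. Items (b) and (c) together are exactly the statement that $z$ exhibits superexponential growth in the sense of Definition \ref{defn.super_exp}. Throughout we use that $z\in C^1$: the right-hand side of \eqref{eq.bounded_delay}, namely $t\mapsto C\int_{t-\delta}^t f(z(s))\,ds$, is a continuous function of $t$ since $f\circ z$ is continuous, so $z$ is continuously differentiable on $[0,\infty)$.

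For (a): since $f>0$ and $z>0$ on all of $[-\delta,\infty)$, the integrand in \eqref{eq.bounded_delay} is positive, whence $z'(t)=C\int_{t-\delta}^t f(z(s))\,ds>0$ for every $t\ge 0$ and $z$ is strictly increasing. For (b): being increasing, $z$ tends either to $\infty$ or to a finite limit $L$. If the latter held, then $z(s)$ would lie in the compact set $[m_0,L]\subset(0,\infty)$ for all $s\ge-\delta$, where $m_0:=\min_{[-\delta,0]}\psi>0$; by continuity and positivity of $f$ we would get $f(z(s))\ge f_\ast:=\min_{[m_0,L]}f>0$, hence $z'(t)\ge C\delta f_\ast>0$ for all $t\ge 0$ and $z(t)\to\infty$, a contradiction. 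So $z(t)\to\infty$.

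The heart of the argument is (c), where the point is to exploit $f(x)/x\to\infty$ to show that $z(t)/z(t-\epsilon)$ not only stays bounded away from $1$ but in fact tends to $\infty$. Fix $\epsilon>0$; by monotonicity of $z$ it suffices to treat $\epsilon\le 2\delta$ (for $\epsilon>2\delta$ one has $z(t-\epsilon)\le z(t-2\delta)$), so set $\delta':=\epsilon/2\le\delta$. The only consequence of "$f$ asymptotically increasing'' that we use is that there is an $X_1>0$ with $f(u)\ge\tfrac12 f(v)$ whenever $u\ge v\ge X_1$. Then, using $z$ increasing and $[t-\delta',t]\subseteq[t-\delta,t]$, once $z(t-\delta')\ge X_1$ we obtain $z'(t)\ge C\int_{t-\delta'}^t f(z(s))\,ds\ge \tfrac{C\delta'}{2} f(z(t-\delta'))$. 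Now let $\Lambda>0$ be an arbitrary target. Since $f(x)/x\to\infty$ choose $M>0$ with $f(x)\ge Mx$ for all $x\ge X_M\ge X_1$; since $z(t)\to\infty$ there is $T$ with $z(t-\delta')\ge X_M$ for $t\ge T$, and therefore $z'(t)\ge \tfrac{C\delta' M}{2}z(t-\delta')$ for $t\ge T$. Integrating over $[t-\delta',t]$ (valid for $t\ge T+\delta'$), changing variables, and using monotonicity of $z$ yields
\[
z(t)>z(t)-z(t-\delta')\ \ge\ \frac{C\delta' M}{2}\int_{t-2\delta'}^{t-\delta'}z(u)\,du\ \ge\ \frac{C(\delta')^2 M}{2}\,z(t-2\delta')\ =\ \frac{C(\delta')^2 M}{2}\,z(t-\epsilon).
\]
Choosing $M$ so large that $\tfrac{C(\delta')^2 M}{2}\ge\Lambda$ gives $z(t)\ge\Lambda\, z(t-\epsilon)$ for all large $t$; since $\Lambda$ was arbitrary, $z(t-\epsilon)/z(t)\to 0$, completing (c).

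\textbf{Main obstacle.} The tempting shortcut — proving $z'(t)/z(t)\to\infty$ and invoking the observation following Definition \ref{defn.super_exp} — is circular: the natural lower bound $z'(t)/z(t)\ge \mathrm{const}\cdot\frac{f(z(t-\delta'))}{z(t-\delta')}\cdot\frac{z(t-\delta')}{z(t)}$ is useless unless one already knows $z(t-\delta')/z(t)$ does not decay to $0$, which is precisely the conclusion sought. The bootstrap above circumvents this by establishing the scale-free estimate "$z(t)\ge\Lambda z(t-\epsilon)$ eventually, for every $\Lambda$'' directly out of the superlinearity of $f$, the crucial freedom being that the comparison constant $M$ may be taken arbitrarily large. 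The only other subtlety is that the delay window $\delta$ is fixed while $\epsilon$ ranges over all of $(0,\infty)$; this is dealt with by running the comparison at the reduced lag $\delta'=\epsilon/2\le\delta$ for $\epsilon\le 2\delta$ and by monotonicity of $z$ otherwise.
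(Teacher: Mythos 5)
Your proof is correct and takes essentially the same route as the paper's: both arguments integrate the delay equation twice over a window comparable to the lag (your pointwise bound $z'(t)\ge \tfrac{C\delta'}{2}f(z(t-\delta'))$ followed by a second integration reproduces the paper's double integral $\int_{t-\sigma}^t (t-u)f(z(u))\,du$), and both then feed in the superlinearity $f(x)/x\to\infty$ at the delayed point — you via an arbitrary linear minorant $f(x)\ge Mx$, the paper via the divergence of $\phi(z(t-\sigma))/z(t-\sigma)$ for an increasing $\phi\sim f$ — before handling large $\epsilon$ by monotonicity exactly as the paper does for $\sigma>\delta$. (The only quibble, immaterial to the argument, is that in step (b) the compact set should be $[m_0,\max(L,\max_{[-\delta,0]}\psi)]$, or one may simply restrict to $t\ge\delta$ so that only values $z(s)\in[z(0),L]$ enter the integral.)
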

	\begin{proof}[Proof of Lemma \ref{lemma.bounded_delay_1}]
		Assuming $z \in C([-\delta,\infty);(0,\infty))$ and \eqref{eq.psi} implies that $t \mapsto z(t)$ is increasing for $t \in [0,\infty)$ and hence that $\lim_{t\to\infty}z(t)=\infty$. Suppose $\sigma \in (0,\delta]$; let $t > 2\delta$ and integrate \eqref{eq.bounded_delay} from $t-\sigma$ to $t$ to obtain
		\begin{align*}
		z(t) - z(t-\sigma) &= C\,\int_{t-\sigma}^t \int_{s-\delta}^s f(z(u)) \,du\,ds =  C\,\int_{t-\sigma - \delta}^t \int_{(t-\sigma)\vee u}^{t \wedge (u+\delta)} f(z(u)) \,ds\,du,
		\end{align*}
		for each $t > 2\delta$. Using the positivity of $z$ and \eqref{eq.f1_ch6} yields the lower bound
		\[
		z(t) - z(t-\sigma) \geq C\,\int_{t-\sigma}^t \int_{(t-\sigma)\vee u}^{t \wedge (u+\delta)} f(z(u)) \,ds\,du \geq C\,\int_{t-\sigma}^t (t-u) f(z(u)) \,du, 
		\] 
		for each $t>2\delta$. The estimate above can (equivalently) be written as
		\[
		\frac{z(t)}{z(t-\sigma)} \geq 1 + \frac{C\,\int_{t-\sigma}^t (t-u) f(z(u)) \,du}{z(t-\sigma)}, \quad \mbox{ for each }t>2\delta.
		\]
		By \eqref{eq.f1_ch6}, there exists a continuous, increasing function $\phi$ such that $\phi(x) \sim f(x)$ as $x \to \infty$. Since $\lim_{t\to\infty}z(t)=\infty$, for each $\epsilon \in (0,1)$, there exists $T(\epsilon)>0$ such that $f(z(t)) > (1-\epsilon)\phi(z(t))$ for all $t \geq T(\epsilon)$. Thus,	by making the substitution $\alpha = t-u$ and using the monotonicity of $\phi$, it can be shown that 
		\[
		\int_{t-\sigma}^t (t-u) f(z(u)) \,du = \int_0^\sigma \alpha f(z(t-\alpha))\,d\alpha > \frac{(1-\epsilon)\sigma^2}{2}\phi(z(t-\sigma)),
		\]
		for each $t >2\delta+T(\epsilon)+\sigma$.
		Hence 
		\[
		\frac{z(t)}{z(t-\sigma)}> 1+  \frac{C\,(1-\epsilon)\sigma^2}{2}\frac{\phi(z(t-\sigma))}{z(t-\sigma)}, \quad t > 2\delta+T(\epsilon)+\sigma.
		\]
		Since $f(x)/x \to \infty$ as $x\to\infty$ and $\lim_{t\to\infty}z(t-\sigma)=\infty$ for each $\sigma \in (0,\delta]$, taking the liminf in the inequality above shows that $\lim_{t\to\infty}z(t)/z(t-\sigma) = \infty$. Therefore
		\begin{equation}\label{lim.super_exp}
		\lim_{t\to\infty}\frac{z(t-\sigma)}{z(t)}=0, \quad\mbox{for each $\sigma \in (0,\delta]$}.
		\end{equation}
		Finally, since $z$ is monotonically increasing, $z(t-\delta) \geq z(t-\sigma)$ for each $\sigma>\delta$, and for $t$ sufficiently large. Hence, from \eqref{lim.super_exp}, \[
		0 = \limsup_{t\to\infty}\frac{z(t-\delta)}{z(t)} \geq \limsup_{t\to\infty}\frac{z(t-\sigma)}{z(t)} \geq 0, \quad\mbox{for each $\sigma > \delta$}.
		\]
		Thus \eqref{lim.super_exp} holds for all $\sigma>0$ and $z$ obeys Definition \ref{defn.super_exp}, as required. 
	\end{proof}
	We immediately have the following useful lemma which we record for future use. 
	\begin{lemma}\label{lemma.bounded_delay_2}
		Let $C>0$ and $\delta>0$, and suppose that \eqref{eq.f1_ch6} and \eqref{eq.psi} hold. If the solution to \eqref{eq.bounded_delay} obeys $z \in C([-\delta,\infty);(0,\infty))$ and $\bar{F}$ is defined by \eqref{def.bigF}, then
		\[
		\lim_{t\to\infty}\bar{F}(z(t-\delta))/\bar{F}(z(t))=0.
		\] 
	\end{lemma}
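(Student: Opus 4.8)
The plan is to obtain the lemma directly from the two preceding results by unwinding the relevant definitions; essentially no new work is required. First I would invoke Lemma \ref{lemma.bounded_delay_1}: since by hypothesis $z \in C([-\delta,\infty);(0,\infty))$ solves \eqref{eq.bounded_delay}, that lemma tells us that $z$ exhibits superexponential growth in the sense of Definition \ref{defn.super_exp}, i.e. $z(t)\to\infty$ and $\lim_{t\to\infty}z(t-\sigma)/z(t)=0$ for every $\sigma>0$. Next I would invoke Corollary \ref{corollary.convex}: $\bar{F}$ is the primitive of the positive increasing function $f$, hence is itself increasing and convex, and so by Proposition \ref{LEMMA.PRESERVES}(ii) it preserves superexponential growth in the sense of Definition \ref{def.preserves_super_exp}.

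With these two facts in hand the conclusion is immediate. I would apply Definition \ref{def.preserves_super_exp} with the function being tested taken to be $\phi=\bar{F}$, the function exhibiting superexponential growth taken to be $g=z$, and the shift taken to be $\epsilon=\delta$. This gives exactly
\[
\lim_{t\to\infty}\frac{\bar{F}(z(t-\delta))}{\bar{F}(z(t))}=0,
\]
which is the assertion of Lemma \ref{lemma.bounded_delay_2} after renaming the dummy variable.

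There is no genuine obstacle here; the only point needing a moment's care is a bookkeeping one. Definitions \ref{defn.super_exp} and \ref{def.preserves_super_exp} are stated for functions on $(0,\infty)$, whereas $z$ is defined on $[-\delta,\infty)$. However, $z$ is positive and $z(t)\to\infty$, so for all $t$ large enough both $z(t)$ and $z(t-\delta)$ lie in $(0,\infty)$, are arbitrarily large, and hence belong to the domain of $\bar{F}$; thus $t\mapsto\bar{F}(z(t))$ and $t\mapsto\bar{F}(z(t-\delta))$ are well defined for large $t$ and all the limit statements above are meaningful. Restricting attention to such $t$ throughout costs nothing, since we are only concerned with the behaviour as $t\to\infty$.
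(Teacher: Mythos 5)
Your argument is correct and is exactly the route the paper intends: it records Lemma \ref{lemma.bounded_delay_2} as an immediate consequence of Lemma \ref{lemma.bounded_delay_1} together with Corollary \ref{corollary.convex} and Definition \ref{def.preserves_super_exp}, applied with $g=z$ and $\epsilon=\delta$. Your domain/bookkeeping remark about restricting to large $t$ is a harmless refinement of the same reasoning.
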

	\begin{lemma}\label{lemma.bounded_delay_kernel_explodes}
		Let $\delta>0$, and suppose that \eqref{eq.f1_ch6}, \eqref{eq.w}, and \eqref{eq.psi} hold. If \eqref{condition.finite} holds, then solutions to \eqref{eq.bounded_delay_kernel} blows up in finite--time.
	\end{lemma}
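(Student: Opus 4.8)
The strategy is to exploit $w(0)>0$ to minorise \eqref{eq.bounded_delay_kernel} by a constant--kernel bounded--delay equation of the type \eqref{eq.bounded_delay}, to show that such an equation must explode whenever \eqref{condition.finite} holds, and then to transfer the explosion back to $y$ by comparison. After the routine reduction of replacing $f$ by a continuous increasing minorant $\tilde f\le f$ with $\tilde f(x)/x\to\infty$ for which \eqref{condition.finite} still holds --- a solution driven by the smaller nonlinearity is dominated by $y$, so it suffices to explode that one --- we may assume that $f$ is increasing. Since $w$ is continuous with $w(0)>0$, fix $\delta'\in(0,\delta]$ and $c>0$ with $w(u)\ge c$ for $u\in[0,\delta']$, so that
\[
y'(t)\;\ge\; c\int_{t-\delta'}^{t}f(y(s))\,ds,\qquad t\ge 0,
\]
and let $z$ be the solution of \eqref{eq.bounded_delay} with constant $c$, delay $\delta'$, and initial function $\psi|_{[-\delta',0]}$.

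The heart of the argument is to show that $z$ cannot be global. Suppose $z\in C([-\delta',\infty);(0,\infty))$. Then $z'>0$, so $z$ increases to a limit $L\in(0,\infty]$; were $L$ finite, $z'(t)=c\int_{0}^{\delta'}f(z(t-u))\,du\to c\delta'f(L)>0$ would force $z$ to be unbounded, so $L=\infty$. Moreover $z'$ is eventually increasing, because $t\mapsto f(z(t-u))$ is increasing for each fixed $u\in[0,\delta']$ once $t\ge\delta'$. This is the rigorous counterpart of the heuristic $z''\approx cf(z)$. Multiplying $z'(t)=c\int_{t-\delta'}^{t}f(z(s))\,ds$ by $z'(t)$, using $z'(t)\ge z'(s)$ for $s\in[t-\delta',t]$, and recalling $\tfrac{d}{ds}\bar{F}(z(s))=f(z(s))z'(s)$, we get, for all large $t$,
\[
\bigl(z'(t)\bigr)^2 \;\ge\; c\int_{t-\delta'}^{t} z'(s)f(z(s))\,ds \;=\; c\bigl(\bar{F}(z(t))-\bar{F}(z(t-\delta'))\bigr).
\]
Controlling the lagged term $\bar{F}(z(t-\delta'))$ is the step I expect to be the main obstacle, and it is handled by a bootstrap: under the standing assumption that $z$ is global, Lemma \ref{lemma.bounded_delay_2} applies to $z$ and gives $\bar{F}(z(t-\delta'))/\bar{F}(z(t))\to0$, so there is $T_1$ with $\bigl(z'(t)\bigr)^2\ge\tfrac{c}{2}\bar{F}(z(t))$ for $t\ge T_1$. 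Dividing by $\sqrt{\bar{F}(z(t))}$, integrating over $[T_1,t]$ and substituting $u=z(s)$,
\[
\int_{z(T_1)}^{z(t)}\frac{du}{\sqrt{\bar{F}(u)}}\;\ge\;\sqrt{c/2}\,(t-T_1);
\]
as $t\to\infty$ the right side diverges, while by \eqref{condition.finite} the left side remains bounded by $\int_{z(T_1)}^{\infty}du/\sqrt{\bar{F}(u)}<\infty$, a contradiction. Hence $z$ explodes at a finite time; the identical argument with $c$ replaced by any $c-\epsilon$, $\epsilon\in(0,c)$, shows that the solution $z_\epsilon$ of \eqref{eq.bounded_delay} with constant $c-\epsilon$, delay $\delta'$ and data $\psi|_{[-\delta',0]}$ explodes at some finite $T^\ast_\epsilon$.

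It remains to transfer the explosion to $y$. Fix $\epsilon\in(0,c)$. Since $y$ and $z_\epsilon$ share the same data on $[-\delta',0]$, $w\ge0$, $f>0$, and $f$ is increasing, whenever $y\ge z_\epsilon$ on the relevant delay window we have
\[
y'(t)\;\ge\; c\int_{t-\delta'}^{t}f(y(s))\,ds\;\ge\; c\int_{t-\delta'}^{t}f(z_\epsilon(s))\,ds\;>\;(c-\epsilon)\int_{t-\delta'}^{t}f(z_\epsilon(s))\,ds\;=\;z_\epsilon'(t);
\]
this strict inequality prevents the graphs of $y$ and $z_\epsilon$ from meeting, so $y(t)\ge z_\epsilon(t)$ throughout $[-\delta',T^\ast_\epsilon)$. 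Therefore $y(t)\ge z_\epsilon(t)\to\infty$ as $t\to(T^\ast_\epsilon)^-$, and since $y$ is nondecreasing on $[0,\infty)$ and continues as long as it stays bounded (the right-hand side of \eqref{eq.bounded_delay_kernel} is bounded whenever $y$ is), $y$ blows up in finite time in the sense of Definition \ref{defn.blow_up}. The two delicate points are the control of $\bar{F}(z(t-\delta'))$, handled by the bootstrap through Lemma \ref{lemma.bounded_delay_2}, and the comparison/continuation bookkeeping, which is routine given the monotonicity built into the problem.
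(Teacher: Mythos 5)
Your proof is correct and follows essentially the same route as the paper's: minorise \eqref{eq.bounded_delay_kernel} by a constant--kernel bounded--delay equation of the form \eqref{eq.bounded_delay}, use Lemma \ref{lemma.bounded_delay_2} to neglect the lagged term in the estimate $\left(z'(t)\right)^2 \geq c\left(\bar{F}(z(t)) - \bar{F}(z(t-\delta'))\right)$, and integrate to contradict \eqref{condition.finite}. The differences are only organisational: the paper assumes $y$ is global and gets the contradiction directly from a lower comparison solution started at $y/2$ (so it needs neither your explicit blow--up transfer via the $c-\epsilon$ device nor the upfront reduction to an increasing minorant, which it replaces by the asymptotic minorant $\phi$ along the solution for large times), but these are bookkeeping variants of the same argument.
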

	\begin{proof}[Proof of Lemma \ref{lemma.bounded_delay_kernel_explodes}]
		Under the stated hypotheses there is a continuous solution to \eqref{eq.bounded_delay_kernel} on an interval $[-\delta,T)$ for some $T>0$. Suppose $T=\infty$, let $t\geq\delta$, and estimate as follows:
		\begin{align*}
		y'(t) &= \int_{t-\delta}^t w(t-s) f(y(s))\,ds = \int_0^\delta w(u) f(y(t-u))\,du \geq \inf_{s \in [0,\delta]}w(s) \int_{0}^\delta f(y(t-u))\,du. 
		\end{align*}
		Define $\underbar w(\delta)=\inf_{s \in [0,\delta]}w(s)$ and note that \eqref{eq.w} guarantees $\underbar w(\delta)>0$. Hence 
		$
		y'(t) > \underbar w(\delta) \int_{t-\delta}^t f(y(s))\,ds$ for $t \geq \delta.
		$
		By \eqref{eq.f1_ch6}, there exists a continuous, increasing function $\phi$ such that for each $\epsilon \in (0,1/2)$, $f(y(u)) > (1-\epsilon)\phi(y(u))$ for each $u \geq T_1(\epsilon)+\delta$. Define $z$ by
		\begin{align}\label{def.lower_comparison}
		z'(t) = \underbar{w}(\delta)(1-2\epsilon)\int_{t-\delta}^t \phi(z(s))\,ds, \quad t \geq T_1(\epsilon) + \delta; \quad z(t)=\frac{y(t)}{2},
		\end{align}
		for $t \in [0,T_1(\epsilon) + \delta]$. By construction, $z(t)<y(t)$ for each $t \in [0,T_1+\delta]$. Hence, by a simple time of the first breakdown argument, $z(t)< y(t)$ for all $t\geq 0$. Due to the continuity of $\phi$, $z \in C^2((T_1+\delta,\infty);(0,\infty))$ and because $\phi \circ z$ is increasing
		\[
		z''(t) = \underbar w(\delta)(1-2\epsilon)\{\phi(z(t)) - \phi(z(t-\delta))\} > 0, \quad \mbox{ for each }t > T_1 + \delta,
		\]
		so $z$ is convex on $(T_1 + \delta,\infty)$. Now use the convexity of $z$ to show that
		\begin{align}\label{est.first_order_lower}
		\left(z'(t) \right)^2 &= \underbar{w}(\delta)(1-2\epsilon) \int_{t-\delta}^t \phi(z(s))z'(t)\,ds\geq \underbar{w}(\delta)(1-2\epsilon) \int_{t-\delta}^t \phi(z(s))z'(s)\,ds \nonumber\\
		&= \underbar w(\delta)(1-2\epsilon) \left\{ \bar{\Phi}(z(t)) - \bar{\Phi}(z(t-\delta)) \right\}, \quad \mbox{ for each }t > T_1 + 2\delta,
		\end{align}
		where $\bar{\Phi}(x) = \int_0^x \phi(s)\,ds$. The function $q$ given by $q(t) = z(t+T_1+\delta)$ for $t \geq -T_1-\delta$ solves \eqref{eq.bounded_delay} with $C = \underbar w(\delta)(1-2\epsilon)$ and $\psi = y/2$. Hence Lemmas \ref{lemma.bounded_delay_1} and \ref{lemma.bounded_delay_2} apply to $q$, and therefore
		\[
		\lim_{t\to\infty}\frac{q(t-\delta)}{q(t)} =0, \quad \lim_{t\to\infty}\frac{\bar{\Phi}(q(t-\delta))}{\bar{\Phi}(q(t))} =0.
		\]
		It follows that 
		$
		\lim_{t\to\infty}\bar{\Phi}(z(t-\delta))/\bar{\Phi}(z(t)) =0
		$
		and combining this limit with \eqref{est.first_order_lower} yields
		\[
		\liminf_{t\to\infty}\frac{(z'(t))^2}{\bar{\Phi}(z(t))} \geq \underbar w(\delta)(1-2\epsilon)>0.
		\]
		Thus there exists a $T^*(\epsilon)>0$ such that for each $\epsilon \in (0,1/2)$
		\[
		\frac{(z'(t))^2}{\bar{\Phi}(z(t))} > (1-\epsilon)\underbar w(\delta)(1-2\epsilon), \quad t \geq T^*(\epsilon).
		\]
		Taking the square root across the inequality above and integrating from $T^*(\epsilon)$ to some fixed $t > T^*(\epsilon)$ we obtain
		\begin{align*}
		\int_{T^*}^t \frac{z'(s)ds}{\bar{\Phi}(z(s))^{1/2}} = \int_{z(T^*)}^{z(t)}\bar{\Phi}(u)^{-1/2} \geq (t-T^*)\sqrt{(1-\epsilon) \underbar w(\delta)(1-2\epsilon)}, \quad t > T^*.
		\end{align*}
		Since $z(t) \to \infty $ as $t \to \infty$ and $\bar{F}(x) \sim \bar{\Phi}(x)$ as $x\to\infty$, taking the liminf in the inequality above gives 
		\[
		\int_{z(T^*)}^{\infty}\bar{F}(u)^{-1/2} = \infty,
		\]
		in contradiction to \eqref{condition.finite}. Therefore $T<\infty$, as claimed.
	\end{proof}
	\begin{lemma}\label{lemma.bounded_delay_no_explosion}
		Let $C>0$ and $\delta>0$, and suppose that \eqref{eq.f1_ch6} and \eqref{eq.psi} hold. If \eqref{condition.infinite} holds, then solutions to \eqref{eq.bounded_delay} obey $z \in C([-\delta,\infty);(0,\infty))$. Similarly, solutions to \eqref{eq.bounded_delay_kernel} obey $y \in C([-\delta,\infty);(0,\infty))$.
	\end{lemma}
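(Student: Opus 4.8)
The plan is to establish the claim for the constant-kernel equation \eqref{eq.bounded_delay} by a second-order energy estimate built on the heuristic ``$z''\approx Cf(z)$'', and then to deduce the statement for \eqref{eq.bounded_delay_kernel} by bounding the kernel above and comparing.

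Consider first a solution $z$ of \eqref{eq.bounded_delay} on its maximal interval $[-\delta,T)$; we must show $T=\infty$. Since $f>0$ and $\psi>0$ by \eqref{eq.psi}, the right-hand side of \eqref{eq.bounded_delay} is positive while $z$ is, so $z$ is positive and strictly increasing on $[0,T)$ and $z(t)\uparrow \ell\in(0,\infty]$ as $t\to T^-$. If $\ell<\infty$, then $z$ stays in a compact subinterval of $(0,\infty)$ on which $f$ is bounded, so $z'$ is bounded, $z$ extends continuously to $T$, and the solution continues past $T$ --- contradicting maximality. Hence $z(t)\to\infty$, and with $\bar F$ from \eqref{def.bigF} also $\bar F(z(t))\to\infty$. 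For $t>\delta$ the integrand in \eqref{eq.bounded_delay} is continuous, so $z\in C^2((\delta,\infty))$ and, discarding the nonnegative delayed term in the spirit of the approximation $z''\approx Cf(z)$,
\[
z''(t)=Cf(z(t))-Cf(z(t-\delta))\le Cf(z(t)),\qquad t>\delta .
\]
Multiplying by $z'(t)>0$, integrating from a fixed $T^*>\delta$, and using $\bar F(z(t))\to\infty$ gives $z'(t)^2\le K\,\bar F(z(t))$ on some terminal interval $[T^{**},T)$ with $K>0$. Dividing by $\sqrt{\bar F(z(t))}$, integrating, and substituting $u=z(s)$ yields
\[
\int_{z(T^{**})}^{z(t)}\frac{du}{\sqrt{\bar F(u)}}\le \sqrt K\,(T-T^{**}),\qquad T^{**}\le t<T .
\]
If $T$ were finite, letting $t\to T^-$ would force the left-hand side to $\int_{z(T^{**})}^\infty du/\sqrt{\bar F(u)}=+\infty$ by \eqref{condition.infinite}, contradicting the finite right-hand side. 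Hence $T=\infty$ and $z\in C([-\delta,\infty);(0,\infty))$.

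For \eqref{eq.bounded_delay_kernel}, set $\bar w_\delta:=\max_{s\in[0,\delta]}w(s)$, which is finite by \eqref{eq.w}, and note $y'(t)=\int_0^\delta w(u)f(y(t-u))\,du\le \bar w_\delta\int_{t-\delta}^t f(y(s))\,ds$, so $y$ is a subsolution of \eqref{eq.bounded_delay} with $C=\bar w_\delta$. A time-of-first-breakdown comparison (cf.\ the proof of Lemma \ref{lemma.bounded_delay_kernel_explodes}; perturb the initial data and the constant so that the relevant inequalities become strict, then pass to the limit) gives $y\le z$ on the common interval of existence, where $z$ solves \eqref{eq.bounded_delay} with $C=\bar w_\delta$. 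Since that $z$ is global by the first part and $y$ is positive and continuous, $y\in C([-\delta,\infty);(0,\infty))$ as well.

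The technical core is the estimate $z'(t)^2\le K\bar F(z(t))$, which is fairly clean once the sign of the delayed term is observed; the more delicate point is the comparison in the last step, since under \eqref{eq.f1_ch6} the nonlinearity $f$ need not be monotone on the (compact) range traversed by $y$ and $z$ on a bounded interval, which is what the first-breakdown argument ostensibly needs. This is handled as in Lemma \ref{lemma.bounded_delay_kernel_explodes}: replace $f$ in the comparison equation by a continuous increasing $\tilde\phi\ge f$ obtained from \eqref{eq.f1_ch6}, for which $\int_0^u\tilde\phi(s)\,ds$ is of the same order as $\bar F(u)$, so that \eqref{condition.infinite} is preserved and the first part applies to $\tilde\phi$. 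The remaining details are routine.
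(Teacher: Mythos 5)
Your argument is correct, but for the constant--kernel equation \eqref{eq.bounded_delay} it takes a genuinely different route from the paper. You work directly with the solution on its maximal interval: from $z''(t)=Cf(z(t))-Cf(z(t-\delta))\le Cf(z(t))$ (only positivity of $f$ is needed here), multiplying by $z'>0$ and integrating gives $z'(t)^2\le K\bar{F}(z(t))$ near the terminal time, and then asymptotic integration shows $\int^{z(t)}du/\sqrt{\bar F(u)}$ stays bounded on a finite interval, contradicting \eqref{condition.infinite} if $z$ blew up. The paper instead constructs an explicit global supersolution $\alpha$ solving $\alpha'=\sqrt{2K_1\int_1^\alpha\phi_\kappa(u)\,du}$, with $K_1$ tuned so that $\alpha'(t)>\int_{t-\delta}^t\phi_\kappa(\alpha(u))\,du$, and runs a first--breakdown comparison; that construction requires the increasing majorant $\phi_\kappa\ge f$ already in this first step and sidesteps any maximal--interval/continuation discussion, whereas your energy estimate avoids the majorant entirely and is closer in spirit to the computation the paper itself uses later (compare \eqref{est.first_order_lower} and the proof of Lemma \ref{lemma.bounded_rate}). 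For \eqref{eq.bounded_delay_kernel} the two proofs coincide: bound $w$ by $\bar w_\delta=\sup_{[0,\delta]}w$ and compare with \eqref{eq.bounded_delay}; as you note, the first--breakdown step needs the increasing continuous majorant $\tilde\phi\ge f$ supplied by \eqref{eq.f1_ch6}, under which \eqref{condition.infinite} is preserved. Two small remarks: the delayed term you discard is nonnegative and enters with a minus sign (your wording is slightly off but the inequality is right), and no ``passing to the limit'' is needed in the comparison --- taking a strictly larger constant (e.g.\ $2\bar w_\delta$, as the paper does) and strictly larger initial data already gives the strict inequality at a putative first contact time.
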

	\begin{proof}[Proof of Lemma \ref{lemma.bounded_delay_no_explosion}]
		First consider equation \eqref{eq.bounded_delay}. By \eqref{eq.psi}, there exists a $T \in (0,\infty]$ such that $z \in C([-\delta,T);(0,\infty))$ and $\lim_{t\to T^-}z(t)=\infty$. Suppose $T \in (0,\infty)$. By \eqref{eq.f1_ch6}, there exists an increasing, continuous function $\phi$ such that $f(x) < \kappa \,\phi(x)$ for some $\kappa>0$, for each $x>0$. Define $\phi_\kappa(x) = \kappa\, \phi(x)$ for each $x>0$ and note that
		\[
		\int_1^\infty \frac{du}{\sqrt{\int_0^u \phi_\kappa(s)\,ds}} = \infty
		\]
		is equivalent to \eqref{condition.infinite}, since $f \sim \phi$.
		Let $\psi = 1 + \sup_{s \in [0,T/2]}z(s)$ and define the function $\alpha$ by
		\[
		\alpha'(t) = \sqrt{2K_1 \int_{1}^{\alpha(t)}\phi_\kappa(u)\,du }, \quad t \geq 0; \quad \alpha(t)=\psi, \quad t \leq 0,
		\]   
		with 
		$
		K_1 = \max\left\{2, \, (\delta\,\phi_\kappa(\psi))^2/2 \int_1^\psi \phi_\kappa(u)\,du \right\}.
		$ Both $\psi$ and $K_1$ are larger than $1$, and \eqref{condition.infinite} implies $\alpha \in C((-\infty,\infty);(0,\infty))$. In fact, due to the continuity of $\phi_\kappa$, $\alpha \in C^2((0,\infty);(0,\infty))$. Furthermore, $\alpha'(t)>0$ for $t \geq 0$ and due to our choice of $\psi$, $\alpha(t) > z(t)$ for each $t \in [-\delta,T/2]$. Now consider the function
		\[
		A_\alpha(t) := \int_{t-\delta}^t \phi_\kappa(\alpha(u))\,du, \quad t \geq 0.
		\]
		Differentiating $A_\alpha$, estimating, and using the fact that $\alpha''(t) = K_1 \phi_\kappa(\alpha(t))$ for $t > 0$ yields
		\[
		A_\alpha '(t) = \phi_\kappa(\alpha(t)) - \phi_\kappa(\alpha(t-\delta)) < K_1 \phi_\kappa(\alpha(t)) = \alpha''(t), \quad t > 0.
		\]
		Integrating from $0$ to $t$ we obtain
		\begin{align*}
		A_\alpha(t) - A_\alpha(0) &= \int_{t-\delta}^t \phi_\kappa(\alpha(u))\,du - \int_{-\delta}^0 \phi_\kappa(\alpha(u))\,du \\ &\leq \alpha'(t) - \sqrt{2K_1 \int_{1}^{\alpha(0)}\phi_\kappa(u)\,du } = \alpha'(t) - \alpha'(0), \quad t \geq 0.
		\end{align*}
		Rearrangement shows that the inequality above is equivalent to 
		\begin{equation}\label{est.inequality_weak}
		\alpha'(t) \geq \int_{t-\delta}^t \phi_\kappa(\alpha(u))\,du  + \sqrt{2K_1 \int_{1}^{\psi}\phi_\kappa(u)\,du } - \int_{-\delta}^0 \phi_\kappa(\psi)\,du, \quad t \geq 0.
		\end{equation}
		$
		\sqrt{2K_1 \int_{1}^{\psi}\phi_\kappa(u)\,du } - \int_{-\delta}^0 \phi_\kappa(\psi)\,du > 0
		$
		if and only if $K_1 > (\delta\,\phi_\kappa(\psi))^2 / 2 \int_1^\psi \phi_\kappa(u)\,du$, which is guaranteed by our earlier choice of $K_1$. Hence inequality \eqref{est.inequality_weak} implies that 
		\begin{equation}\label{est.key_breakdown}
		\alpha'(t) > \int_{t-\delta}^t \phi_\kappa(\alpha(u))\,du, \quad t \geq 0.
		\end{equation}
		Now suppose there is a minimal $T_B \in (T/2,T)$ such that $\alpha(T_B)= z(T_B)$. Since $\alpha(t)>z(t)$ for each $t \in [-\delta,T/2]$, it must be the case that $z'(T_B) \geq \alpha'(T_B)$. Thus
		\begin{align*}
		z'(T_B) = \int_{T_B-\delta}^{T_B} f(z(u))\,du \leq \int_{T_B-\delta}^{T_B} \phi_\kappa(\alpha(u))\,du < \alpha'(T_B),
		\end{align*}
		where the final \emph{strict} inequality follows from \eqref{est.key_breakdown}. But this implies that $z'(T_B) < \alpha'(T_B) \leq z'(T_B)$, a contradiction. Thus $z(t) < \alpha(t)$ for each $t \in [-\delta,\infty)$ and therefore $T=\infty$  since \eqref{condition.infinite} ensures that $\alpha$ is bounded on compact intervals.
		
		Now consider \eqref{eq.bounded_delay_kernel}. By hypothesis, $y \in C([-\delta,T);(0,\infty))$ for some $T>0$ and thus
		\begin{align*}
		y'(t)  = \int_0^\delta w(u)f(y(t-u))\,du < 2\sup_{s \in [0,\delta]}w(s) \int_0^\delta f(y(t-u))\,du, \quad t\in (0,T).
		\end{align*}
		Define $\bar{w}(\delta) = \sup_{s \in [0,\delta]}w(s) > 0$ and hence define the upper comparison solution $z$ by
		\[
		z'(t) = 2\bar{w}(\delta) \int_{t-\delta}^t \phi_\kappa(z(s))\,ds, \quad t \geq 0;\quad z(t) = \sup_{u \in [-\delta,0]}\psi(u) + 1, \quad t \in [-\delta,0].
		\]
		By the arguments above, $z \in C([-\delta,\infty);(0,\infty))$ and, by construction, $z(t) > y(t)$ for each $t \in [-\delta,T)$. Hence $y$ cannot explode in finite--time and the claim is proven.
	\end{proof}
	Our final lemma identifies the growth rate of solutions to \eqref{eq.bounded_delay}. The corresponding results for \eqref{eq.volterra_ch6} and \eqref{eq.volterra_forced} consist of carefully constructing comparison solutions using equations of the form of \eqref{eq.bounded_delay} and then invoking this lemma.
	\begin{lemma}\label{lemma.bounded_rate}
		Suppose that the hypotheses of Lemma \ref{lemma.bounded_delay_1} hold. If $f$ preserves superexponential growth, then the solution $z \in C([-\delta,\infty);(0,\infty))$ to \eqref{eq.bounded_delay} obeys
		\[
		\lim_{t\to\infty}\frac{F_U(z(t))}{t} = \sqrt{2C}.
		\] 
	\end{lemma}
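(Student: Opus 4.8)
The strategy is to prove the finer statement $\lim_{t\to\infty} z'(t)/\sqrt{\bar F(z(t))} = \sqrt{2C}$, where $\bar F$ is as in \eqref{def.bigF}, and then integrate. Since $z \in C([-\delta,\infty);(0,\infty))$ by hypothesis, Lemma~\ref{lemma.bounded_delay_1} shows that $z$ is increasing with $z(t)\to\infty$ and exhibits superexponential growth; in particular $\bar F(z(t))\to\infty$. Because $f$ preserves superexponential growth, Definition~\ref{def.preserves_super_exp} (applied with the superexponentially growing function $t\mapsto z(t)$ and $\epsilon = \delta$) gives $\lim_{t\to\infty} f(z(t-\delta))/f(z(t)) = 0$, while Lemma~\ref{lemma.bounded_delay_2} gives $\lim_{t\to\infty}\bar F(z(t-\delta))/\bar F(z(t)) = 0$. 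Finally, for $t \ge 2\delta$ the right-hand side of \eqref{eq.bounded_delay} is continuously differentiable, so $z \in C^2([2\delta,\infty);(0,\infty))$ with $z''(t) = C\{f(z(t)) - f(z(t-\delta))\}$ and $z'(t) = C\int_{t-\delta}^t f(z(s))\,ds > 0$.

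Next, fix $\epsilon \in (0,1)$ and, using $f(z(t-\delta))/f(z(t))\to 0$, choose $T^* > 2\delta$ with $f(z(s-\delta)) \le \epsilon f(z(s))$ for all $s \ge T^*$. Multiplying $z''(s) = C\{f(z(s)) - f(z(s-\delta))\}$ by $z'(s) > 0$, integrating from $T^*$ to $t$, and using $\frac{d}{ds}\bar F(z(s)) = f(z(s))z'(s)$ yields
\[
\tfrac12(z'(t))^2 - \tfrac12(z'(T^*))^2 = C\{\bar F(z(t)) - \bar F(z(T^*))\} - C\int_{T^*}^t f(z(s-\delta))z'(s)\,ds .
\]
The last integral lies between $0$ and $\epsilon\{\bar F(z(t)) - \bar F(z(T^*))\}$, so there are constants $c_1, c_2$ with $2C(1-\epsilon)\bar F(z(t)) + c_1 \le (z'(t))^2 \le 2C\bar F(z(t)) + c_2$ for all $t \ge T^*$. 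Dividing by $\bar F(z(t))\to\infty$ and then letting $\epsilon \downarrow 0$ gives $\lim_{t\to\infty}(z'(t))^2/\bar F(z(t)) = 2C$, hence $\lim_{t\to\infty} z'(t)/\sqrt{\bar F(z(t))} = \sqrt{2C}$.

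Since $F_U'(x) = 1/\sqrt{\bar F(x)}$ by \eqref{defn.F_unbounded}, we have $\frac{d}{dt}F_U(z(t)) = z'(t)/\sqrt{\bar F(z(t))} \to \sqrt{2C}$. Writing $F_U(z(t))/t = F_U(z(T^*))/t + t^{-1}\int_{T^*}^t \frac{d}{ds}F_U(z(s))\,ds$ and letting $t\to\infty$ — the first term vanishes and the second is a Cesàro average of a quantity converging to $\sqrt{2C}$ — gives $\lim_{t\to\infty}F_U(z(t))/t = \sqrt{2C}$, as required. The crux of the argument is the control of the memory term $C\int_{T^*}^t f(z(s-\delta))z'(s)\,ds$: it is precisely the hypothesis that $f$ preserves superexponential growth which converts the superexponential growth of $z$ (from Lemma~\ref{lemma.bounded_delay_1}) into the pointwise smallness $f(z(s-\delta)) = o(f(z(s)))$, and hence renders this term a negligible fraction of $\bar F(z(t))$ after integration against $z'$. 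The $C^2$-regularity for large $t$, the energy-type integration, and the final Cesàro step are all routine.
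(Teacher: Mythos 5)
Your proof is correct and follows essentially the same route as the paper's: establish $f(z(t-\delta))=o(f(z(t)))$ from Lemma \ref{lemma.bounded_delay_1} and the preservation of superexponential growth, deduce $(z'(t))^2 \sim 2C\,\bar{F}(z(t))$ from $z''(t)=C\{f(z(t))-f(z(t-\delta))\}$, and then integrate $z'(t)/\sqrt{\bar{F}(z(t))}\to\sqrt{2C}$ to conclude $F_U(z(t))/t\to\sqrt{2C}$. The only (harmless) difference is the middle step: the paper first shows $z'(t)\to\infty$ and applies L'H\^{o}pital's rule to $(z'(t))^2/2C\int_0^{z(t)}f(u)\,du$, whereas you multiply $z''$ by $z'$ and integrate the delayed term away via the bound $f(z(s-\delta))\leq \epsilon f(z(s))$ — an equally valid, slightly more self-contained way to reach the same asymptotic relation.
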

	\begin{proof}[Proof of Lemma \ref{lemma.bounded_rate}]
		Due to the continuity of $f$, $z \in C^2((\delta,\infty);(0,\infty))$ and 
		\begin{equation}\label{eq.z_diff_superexp}
		z''(t) = C f(z(t)) - Cf(z(t-\delta)), \quad t > \delta.
		\end{equation}
		By Lemma \ref{lemma.bounded_delay_1}, $\lim_{t\to\infty}z(t-\delta)/z(t) = 0$ and hence, because $f$ preserves superexponential growth,
$
		\lim_{t\to\infty}f(z(t-\delta))/f(z(t)) = 0.
$ Hence it follows from \eqref{eq.z_diff_superexp} that 
		\begin{align}\label{z_prime_limit}
		\lim_{t\to\infty}\frac{z''(t)}{C f(z(t))} = 1.
		\end{align}
		It follows from \eqref{eq.f1_ch6} that $\int_0^{z(t)}f(u)\,du \to \infty$ as $t\to\infty$ and hence $z'(t) \to \infty$ as $t\to\infty$ by integration of \eqref{z_prime_limit}. Now use L'H\^{o}pital's rule to show that
		\[
		\lim_{t\to\infty}\frac{(z'(t))^2}{2C \int_0^{z(t)}f(u)\,du} = \lim_{t\to\infty}\frac{ z''(t)}{C f(z(t))} = 1.
		\]
		Therefore 
		\[
		\lim_{t\to\infty}\frac{z'(t)}{\sqrt{\int_0^{z(t)}f(u)\,du }} = \sqrt{2C}.
		\]
		It follows that for each $\epsilon>0$ there exists $T^*(\epsilon)>0$ such that 
		\[
		\sqrt{2C} - \epsilon < \frac{z'(t)}{\sqrt{\int_0^{z(t)}f(u)\,du }} < \epsilon + \sqrt{2C}, \quad t \geq T^*(\epsilon).
		\]
		Suppose $t > T^*(\epsilon)$ and integrate the inequality above to yield
		\[
		(\sqrt{2C} - \epsilon)(t-T^*) < \int_{T^*}^t \frac{z'(u)\,du}{\sqrt{\int_0^{z(u)}f(s)\,ds }} < (\epsilon + \sqrt{2C})(t-T^*), \quad t > T^*(\epsilon).
		\]
		By making the substitution $y = z(u)$ it is straightforward to show that 
		\[
		(\sqrt{2C} - \epsilon)\frac{t-T^*}{t} + \frac{F_U(z(T^*))}{t} < \frac{F_U(z(t))}{t} < (\epsilon + \sqrt{2C})\frac{t-T^*}{t} + \frac{F_U(z(T^*))}{t},\quad t > T^*(\epsilon),
		\]
		Let $t \to \infty$ and then $\epsilon \to 0^+$ in the inequalities above to complete the proof.
	\end{proof}
	\section{Proofs of Main Results}\label{proofs_main}
	\begin{proof}[Proof of Theorem \ref{thm.sufficient_blow_up}]\textbf{Sufficiency:} Suppose \eqref{condition.finite} holds. By the usual considerations, $x \in C([0,T);(0,\infty))$ for some $T \in (0,\infty]$. Suppose $T=\infty$ and let $\tau>0$ be arbitrary. By \eqref{eq.w} and positivity,
		\begin{align*}
		x'(t) &= \int_{0}^t w(t-s)f(x(s))\,ds > \int_{t-\tau}^t w(t-s)f(x(s))\,ds, \quad t \geq \tau.
		\end{align*}
		Let $\phi$ denote any monotone increasing, continuous function obeying $f(x) \sim \phi(x)$ as $x\to\infty$. Since $x(t)\to\infty$ as $t \to \infty$, for each $\epsilon\in (0,1)$ there exists $T_1(\epsilon)>0$ such that $f(x(t)) > (1-\epsilon)\phi(x(t))$ for each $t \geq T_1(\epsilon)$. Hence
		\begin{align*}
		x'(t) > (1-\epsilon)\int_{t-\tau}^t w(t-s)\phi(x(s))\,ds, \quad t \geq T_1(\epsilon) + \tau.
		\end{align*}
		Define the lower comparison solution $y$ by
		\[
		y'(t) = (1-\epsilon)\int_{t-\tau}^t w(t-s)\phi(y(s))\,ds, \quad t \geq T_1(\epsilon) + \tau; \quad y(t) = x_L(t), \quad t \in [0,T_1+\tau],
		\] 
		where $x_L$ obeys $x_L'(t) = \int_0^t w(t-s)f(x_L(s))\,ds$ for $t \in [0,T_1+\tau]$ and $x_L(0) = x(0)/2$. By construction, $y(t) < x(t)$ for $t \geq 0$. Let $y_\tau(t) = y(t+\tau+T_1)$ for each $t \geq - T_1 - \tau$ and note that $y_\tau$ solves \eqref{eq.bounded_delay_kernel} with $\delta = \tau+T_1$ and $\psi=x_L$. Hence Lemma \ref{lemma.bounded_delay_kernel_explodes} applies to $y_\tau$ and there exists a $T_\tau < \infty$ such that $\lim_{t\to T_\tau^-}y_\tau(t) = \infty$, contradicting the assumption that $T=\infty$ and completing the proof.
		
		\textbf{Necessity:} Suppose \eqref{condition.infinite} holds. As usual, our hypotheses guarantee a well defined solution to \eqref{eq.volterra_ch6} on some maximal interval $[0,T)$ with $T \in (0,\infty]$. Assume, contrary to our claim, that $T<\infty$. Let $\delta \in (0,T)$ and estimate the derivative of $x$ for $t \in (\delta,T)$ as follows:
		\begin{align}\label{est.no_expl_initial}
		x'(t) \leq \bar{w}(\delta)\int_{t-\delta}^t f(x(s))\,ds + \bar{M}(\delta),
		\end{align}
		where $\bar{w}(\delta)=\sup_{s \in [0,\delta]}w(s)$ and $\bar{M}(\delta) = \sup_{t \in [0,T]}\int_0^{t-\delta} w(t-s)f(x(s))\,ds$. Note that
		\[
		\limsup_{t\to T^-}\frac{\bar{M}(\delta)}{\bar{w}(\delta)\int_{t-\delta}^t f(x(s))\,ds} =: C(\delta) \in  [0,\infty).
		\] 
	Combining the limit superior above with \eqref{est.no_expl_initial} yields
		\[
		\limsup_{t\to T^-}\frac{x'(t)}{\bar{w}(\delta)\int_{t-\delta}^t f(x(s))\,ds} \leq 1 + C(\delta)<\infty, \quad \mbox{ for each }\delta \in (0,T).
		\] 
		Thus, for each $\epsilon>0$, there exists $T^*(\epsilon)\in (\delta,T)$ such that
		\[
		x'(t) < (1+\epsilon)(1 + C(\delta))\bar{w}(\delta)\int_{t-\delta}^t f(x(s))\,ds, \quad t \in [T^*(\epsilon),T).
		\]
		Taking $\epsilon = \delta$ in the estimate above gives
		\[
		x'(t) < (1+\delta)(1 + C(\delta))\bar{w}(\delta)\int_{t-\delta}^t f(x(s))\,ds, \quad t \in [T^*(\delta),T).
		\]
		By hypothesis, there is an increasing, continuous function $\phi$ such that $f(x)<\kappa\, \phi(x)$ for some $\kappa>0$, for each $x>0$. As before let $\phi_\kappa(x) = \phi_\kappa(x)$ for each $x>0$. Hence
		\[
		x'(t) < (1+\delta)(1 + C(\delta))\bar{w}(\delta)\int_{t-\delta}^t \phi_\kappa(x(s))\,ds, \quad t \in [T^*(\delta),T).
		\]
		Now define the upper comparison solution $z$ according to 
		\begin{align*}
		z'(t) &= (1+2\delta)(1 + C(\delta))\bar{w}(\delta)\int_{t-\delta}^t \phi_\kappa(z(s))\,ds, \quad t \geq 0, 
		\end{align*}
		with $ z(t) = Z^* := 1+ \sup_{u \in [0,T^*(\delta)]}x(u)$ for $ t \in [-\delta,0]$. By construction, $x(t) < z(t)$ for all $t \in [0,T)$. However, since $z$ solves \eqref{eq.bounded_delay} with $C = (1+2\delta)(1 + C(\delta))\bar{w}(\delta)$ and $\psi \equiv Z^*$, Lemma \ref{lemma.bounded_delay_no_explosion} implies that $z \in C([-\delta,\infty);(0,\infty))$. Therefore the assumption that $T<\infty$ leads to a contradiction and the proof is complete. 
	\end{proof}
	\begin{proof}[Proof of Theorem \ref{thm.expl_rate}]
		By hypothesis there exists $T\in (0,\infty)$ such that $x \in C([0,T);(0,\infty))$ and $\lim_{t \to T^-}x(t)=\infty$. First show that $\lim_{t \to T^-}x'(t) = \infty$. For an arbitrary $\delta \in (0,T)$, construct a lower bound on $x'$ of the form
		\begin{equation}\label{eq.lower_bound_derivative}
		x'(t) > \kappa \,\underline{w}(\delta)\int_{t-\delta}^t \phi(x(u))\,du + \underbar{M}(\delta), \quad t \in (\delta,T),
		\end{equation}
		where $\underbar{M}(\delta)=\inf_{t\in [0,T]}\int_0^{t-\delta}w(t-s)f(x(s))\,ds$, $\kappa>0$, and $\phi$ is monotone increasing and continuous. By hypothesis, $\lim_{t\to T^-}\phi(x(t))=\infty$ and hence
		\[
		\lim_{t\to T^-} \frac{d}{dt}\int_{t-\delta}^t \phi(x(u))\,du = \infty.
		\]
		Thus the function $t \mapsto\int_{t-\delta}^t \phi(x(u))\,du$ is increasing on some interval $(T^*,T)$ and must have a limit as $t\to T^-$. However, if $\lim_{t\to T^-}\int_{t-\delta}^t \phi(x(u))\,du$ is finite, integration of \eqref{est.no_expl_initial} yields
		$
		x(t) \leq A + Bt \mbox{ for }t \in (T^*_1,T),
		$
		for some positive constants $A$ and $B$. But the solution blows-up in finite--time, a contradiction. Therefore, $\lim_{t\to T^-}\int_{t-\delta}^t \phi(x(u))\,du=\infty$ and hence $\lim_{t \to T^-}x'(t) = \infty$, as claimed.
		
		Now let $\delta \in (0,T)$ be arbitrary and estimate as follows:
		\begin{align}\label{est.upper.initial.rate}
		x'(t) \leq \bar{w}(\delta)\int_{t-\delta}^t f(x(u))\, du + \bar{M}(\delta) \quad t \in (\delta,T),
		\end{align}
		where $\bar{w}(\delta) = \sup_{u \in [0,\delta]}w(u)$ and $\bar{M}(\delta) = \sup_{t \in [0,T]}\int_0^{t-\delta} w(t-s)f(x(s))\,ds$. Note that $\bar{M}(\delta)$ is finite for each $\delta \in (0,T)$. By \eqref{est.upper.initial.rate} and the fact that $x'(t)\to\infty$ as $t \to T^-$, we have $\int_{t-\delta}^t f(x(u))\,du \to \infty$ as $t\to T^-$, for each $\delta \in (0,T)$. Thus $\int_0^t f(x(u))\,du \to \infty$ as $t\to T^-$ and, by applying L'H\^{o}pital's rule,
		\[
		\lim_{t\to T^-}\frac{\int_{t-\delta}^t f(x(u))\,du}{\int_0^t f(x(u))\,du} = \lim_{t\to T^-}\frac{f(x(t)) -f(x(t-\delta)) }{f(x(t))} = 1, \quad \mbox{ for each }\delta \in (0,T).
		\] 
		Dividing across by $\bar{w}(\delta) \int_0^t f(x(u))\,du$  in \eqref{est.upper.initial.rate} and taking the limsup thus yields
		\[
		\limsup_{t\to T^-}\frac{x'(t)}{\bar{w}(\delta) \int_0^t f(x(u))\,du} \leq 1, \quad \mbox{ for each }\delta \in (0,T).
		\]
		Letting $\delta \to 0^+$ in the limit above shows that 
		\[
		\limsup_{t\to T^-}\frac{x'(t)}{w(0) \int_0^t f(x(u))\,du} \leq 1.
		\]
		Similarly, we can obtain the following lower estimate on the derivative
		\begin{align*}
		x'(t) > \int_{t-\delta}^t w(t-s)f(x(s))\, ds \geq \underbar w(\delta) \int_{t-\delta}^t f(x(u))\, du, \quad t \in (\delta,T),
		\end{align*}
		where $\underbar w(\delta) = \inf_{u \in [0,\delta]}w(u) > 0$. Following the same steps as above quickly reveals that 
		$
		\liminf_{t\to T^-}x'(t)/w(0) \int_0^t f(x(u))\,du \geq 1.
		$
		Therefore
		\begin{align}\label{limit.first_ode}
		\lim_{t\to T^-}\frac{x'(t)}{w(0)\int_0^t f(x(s))\,ds} = 1.
		\end{align}
		We claim that \eqref{limit.first_ode} implies
		\begin{align}\label{limit.second_ode}
		\lim_{t\to T^-}\frac{x'(t)}{\sqrt{2w(0)\int_0^{x(t)} f(s)\,ds}} = 1.
		\end{align}
		Using \eqref{limit.first_ode}, \eqref{limit.second_ode} is equivalent to 
		\[
		\lim_{t\to T^-}\frac{w(0)\int_0^t f(x(s))\,ds}{\sqrt{2w(0)\int_0^{x(t)} f(s)\,ds}} = 1.
		\] 
		Letting $I(t) = \int_0^t f(x(s))\,ds$, the limit above is in turn equivalent to
		\[
		\lim_{t\to T^-}\frac{[w(0)I(t)]^2}{2w(0)\int_0^{x(t)} f(s)\,ds} = 1.
		\] 
		However, since $I(t) \to \infty$ as $t\to T^-$ and $\int_0^{x(t)} f(s)\,ds \to \infty$ as $t\to T^-$, applying L'H\^{o}pital's rule yields
		\[
		\lim_{t\to T^-}\frac{[(w(0)I(t)]^2}{2w(0)\int_0^{x(t)} f(s)\,ds} = \lim_{t\to T^-}\frac{2[w(0)]^2 I(t)I'(t)}{2w(0)x'(t)f(x(t))} = \lim_{t\to T^-}\frac{w(0)\int_0^t f(x(s))\,ds }{x'(t)} = 1,
		\]
		where the final equality follows from \eqref{limit.first_ode}. Thus \eqref{limit.first_ode} implies \eqref{limit.second_ode}, as claimed.
		
		By \eqref{limit.second_ode}, for each $\epsilon \in (0,1)$, there exists $\bar{T}(\epsilon) \in (0,T)$ such that 
		\[
		1-\epsilon < \frac{x'(t)}{\sqrt{2w(0)\int_0^{x(t)} f(s)\,ds}} < 1+\epsilon, \quad t \in (\bar{T},T).
		\]
		Let $t$ and $T_L$ be such that $\bar{T} < t < T_L < T$ and integrate the inequalities above from $t$ to $T_L$; this yields
		\[
		(1-\epsilon)(T_L-t)\sqrt{2w(0)} < \int_{t}^{T_L}\frac{x'(u)\,du}{\sqrt{\int_0^{x(u)} f(s)\,ds}} < (1+\epsilon)(T_L-t)\sqrt{2w(0)},
		\]
		for $\bar{T} < t < T_L < T$. Make the substitution $y = x(u)$ in the integral to obtain
		\[
		(1-\epsilon)(T_L-t)\sqrt{2w(0)} < \int_{x(t)}^{x(T_L)}\frac{dy}{\sqrt{\int_0^y f(s)\,ds}} < (1+\epsilon)(T_L-t)\sqrt{2w(0)},
		\]
		for $\bar{T} < t < T_L < T$. Now let $T_L \to T^-$ and divide across by $T-t$ to show that
		\[
		(1-\epsilon)\sqrt{2w(0)} < \frac{1}{T-t}\int_{x(t)}^{\infty}\frac{dy}{\sqrt{\int_0^y f(s)\,ds}}  = \frac{F_B(x(t))}{T-t} < (1+\epsilon)\sqrt{2w(0)}, \quad \bar{T} < t < T.
		\]
		Let $\epsilon \to 0^+$ in the inequalities above to complete the proof.
	\end{proof}
	\begin{proof}[Proof of Theorem \ref{thm.non_expl_rate}]
		By hypothesis, $x \in C([0,\infty);(0,\infty))$. Let $\delta > 0$ be arbitrary and estimate as follows:
		\begin{align*}
		x'(t) &> \int_{t-\delta}^t w(t-s)f(x(s))\,ds = \int_{0}^\delta w(u)f(x(t-u))\,du 
		\geq \underbar w(\delta)\int_{t-\delta}^t f(x(s))\,ds, \quad t \geq \delta,
		\end{align*}
		where $\underbar w(\delta) = \inf_{u \in [0,\delta]}w(u) > 0$. By \eqref{eq.f1_ch6}, there exists a continuous, increasing function $\phi$ such that, for each $\epsilon\in (0,1)$, $f(x(s)) > (1-\epsilon)\phi(x(s))$ for all $s \geq T(\epsilon)$. Hence
		\[
		x'(t) > (1-\epsilon)\underbar w(\delta)\int_{t-\delta}^t \phi(x(s))\,ds, \quad t \geq T(\epsilon) + \delta.
		\]
		Now define the lower comparison solution $z_-$ by
		\[
		z_- '(t) = (1-\epsilon)\underbar w(\delta)\int_{t-\delta}^t \phi(z_-(s))\,ds, \quad t \geq T + \delta; \quad z_-(t) = x(t)/2, \quad t \in [0,T+\delta].
		\]
		It can be shown that $z_-(t) < x(t)$ for all $t \geq 0$ and applying Lemma \ref{lemma.bounded_rate} to $z_-$ shows that
		\begin{align}\label{limit.lower.rate}
		\sqrt{2(1-\epsilon) \underbar w(\delta)} = \liminf_{t\to\infty}\frac{F_U(z_-(t))}{t} \leq \liminf_{t\to\infty}\frac{F_U(x(t))}{t},
		\end{align}
		for each $\delta>0$. Let $\delta \to 0^+$ and then $\epsilon \to 0^+$ in the inequality above to obtain
		\[
		\liminf_{t\to\infty}\frac{F_U(x(t))}{t} \geq \sqrt{2 w(0)}.
		\]
		We now tackle the corresponding limsup. By \eqref{eq.f1_ch6}, there exists a continuous, increasing function $\phi$ such that, for each $\epsilon>0$, $f(x(t))< (1+\epsilon)\phi(x(t))$ for each $t \geq T_1(\epsilon)$. Furthermore, we can find a $\kappa>0$ such that $f(x)<\kappa\,\phi(x)$ for each $x>0$. For $\delta > 0$,
		\begin{align*}
		x'(t) &\leq \kappa\,\phi(x(t-\delta)) \int_{0}^{t-\delta}w(t-s)\,ds + \int_{t-\delta}^t w(t-s)f(x(s))\,ds, \quad t \geq \delta+T_1(\epsilon),
		\end{align*}
		where $\bar{w}(\delta) = \sup_{u \in [0,\delta]}w(u) > 0$. Thus
		\begin{align}\label{est.upper_rate_1}
		x'(t) < \kappa\,{\mathcal W} \phi(x(t-\delta)) + (1+\epsilon)\bar{w}(\delta) \int_{t-\delta}^t \phi(x(s))\,ds, \quad t \geq T_1(\epsilon) + \delta.
		\end{align}
		Now make the following lower estimate on the second term in \eqref{est.upper_rate_1}:
		\[
		\bar{w}(\delta)\int_{t-\delta}^t \phi(x(s))\,ds > \bar{w}(\delta)\int_{t-\delta/2}^t \phi(x(s))\,ds \geq \frac{\delta}{2} \phi(x(t-\delta/2)), \quad t \geq T_1(\epsilon) + \delta. 
		\]   
		Hence 
		\[
		0 \leq \limsup_{t\to\infty}\frac{\kappa\,{\mathcal W}\, \phi(x(t-\delta))} {\bar{w}(\delta)\int_{t-\delta}^t \phi(x(s))\,ds} \leq \lim_{t\to\infty}\frac{2 {\kappa\,\mathcal W}}{\delta} \frac{\phi(x(t-\delta))}{\phi(x(t-\delta/2))} = 0, 
		\]
		where the final limit is established by repeating verbatim the argument from Lemma \ref{lemma.bounded_delay_1}. Combining the limit above with \eqref{est.upper_rate_1} then yields
		\[
		\limsup_{t\to\infty}\frac{x'(t)}{\bar{w}(\delta) \int_{t-\delta}^t \phi(x(s))\,ds} \leq 1+\epsilon.
		\]
		Hence, for each $\epsilon>0$, there exists $T^*(\epsilon,\delta)>0$ such that 
		\[
		x'(t) < (1+\epsilon)^2\bar{w}(\delta) \int_{t-\delta}^t \phi(x(s))\,ds, \quad t \geq T^*(\epsilon,\delta).
		\] 
		Fix $\epsilon=\delta>0$ so that 
		$
		x'(t) < (1+\delta)^2\bar{w}(\delta) \int_{t-\delta}^t \phi(x(s))\,ds\mbox{ for } t \geq T^*(\delta).
		$ 
		Define $z_+$ by
		\begin{align*}
		z_+'(t) &= (1+\delta)^2\bar{w}(\delta) \int_{t-\delta}^t \phi(z_+(s))\,ds, \quad t \geq 0; \quad z_+(t)= \sup_{u \in [0,T^*(\delta)]}x(u)+1, \quad t \in [-\delta,0].
		\end{align*}
		By construction, $z_+(t) > x(t)$ for each $t \geq 0$ and, by applying Lemma \ref{lemma.bounded_rate}, we obtain
		\[
		\limsup_{t\to\infty}\frac{F_U(x(t))}{t} \leq \limsup_{t\to\infty}\frac{F_U(z_+(t))}{t} \leq \sqrt{2(1+\delta)^2\bar{w}(\delta)}, \mbox{ for each }\delta>0,
		\]
		once more using that $\phi\sim f$. Therefore, since $\delta > 0$ was arbitrary,
		\[
		\limsup_{t\to\infty}\frac{F_U(x(t))}{t} \leq \sqrt{2 w(0)},
		\]
		and combining the inequality above with the corresponding liminf yields the result.
	\end{proof}
	\begin{proof}[Proof of Theorem \ref{thm.expl_rate_perturbed}]
		We first show that \eqref{condition.finite} is a sufficient condition for the finite--time blow--up of solutions to \eqref{eq.volterra_forced}. Suppose $T=\infty$. For an arbitrary $\tau>0$, we have the trivial lower estimate
		$
		x'(t) > \int_{t-\tau}^t w(t-s)f(x(s))\,ds + h(t)$ for $t \geq \tau.
		$ 
		By \eqref{eq.f1_ch6} there exists an increasing, positive function $\phi$ asymptotic to $f$ and a finite, positive constant $C$ such that
		$
		C = \inf_{x \in \left[x(0)/2,\infty\right)}f(x)/\phi(x)
	$.
		Hence $C\,\phi(x) \leq f(x)$ for $x \geq x(0)/2$. Now let $\varphi(x) = C\,\phi(x)$ for each $x \geq x(0)/2$. Thus
		\[
		x'(t) > \int_{t-\tau}^t w(t-s)\varphi(x(s))\,ds + h(t),\quad t \geq \tau,
		\]
		because $x(t)\geq x(0)>x(0)/2$ for all $t\geq 0$. Integration then yields
		\begin{align}\label{est.integral_pert}
		x(t) \geq x(\tau) + \int_{\tau}^t\int_{u-\tau}^u w(u-s)\varphi(x(s))\,ds\,du + H_\tau(t), \quad t \geq \tau,
		\end{align}
		where $H_\tau(t) = \int_\tau^t h(s)\,ds$. Define the lower comparison solution $y$ by
		\[
		y(t) = y(\tau) + \int_\tau^t \int_{u-\tau}^u w(u-s)\varphi(y(s))\,ds\,du, \quad t \geq \tau;\quad y(t) = x(0)/2, \quad t \in [0,\tau].
		\]
		Of course, $y$ also obeys the delayed integro--differential equation
		\[
		y'(t) = \int_{t-\tau}^t w(t-s)\varphi(y(s))\,ds,\quad t \geq \tau;\quad y(t) = x(0)/2, \quad t \in [0,\tau].
		\] 
		Note that $x(t) \geq x(0)$ for each $t>0$ due to \eqref{eq.H}, and $y(t)< x(t)$ for each $t \in [0,\tau]$ by construction. Use the continuity of $h$ to choose $\tau>0$ sufficiently small that $\int_0^\tau h(s)\,ds \leq x(0)/4$ and suppose $T_B>\tau$ is the minimal time such that $y(T_B)= x(T_B)$. Thus
		\begin{align*}
		y(T_B) &= x(T_B) \geq x(\tau) + H_\tau(T_B) + \int_{\tau}^{T_B}\int_{u-\tau}^u w(u-s)\varphi(x(s))\,ds\,du  \\
		&\geq x(0) + H(T_B) - \int_0^\tau h(s)\,ds + \int_{\tau}^{T_B}\int_{u-\tau}^u w(u-s)\varphi(x(s))\,ds\,du \\
		&\geq x(0) - \int_0^\tau h(s)\,ds + \int_{\tau}^{T_B}\int_{u-\tau}^u w(u-s)\varphi(y(s))\,ds\,du \\
		&> x(0)/2 + \int_{\tau}^{T_B}\int_{u-\tau}^u w(u-s)\varphi(y(s))\,ds\,du = y(T_B),
		\end{align*}
		a contradiction. Therefore $y(t)<x(t)$ for each $t \geq 0$. The proof of necessity in Theorem \ref{thm.sufficient_blow_up} now shows that $T=\infty$ produces a contradiction and hence that $T \in (0,\infty)$, as required.
		
		We have shown $x \in C([0,T);(0,\infty))$ for some $T \in (0,\infty)$ with $\lim_{t \to T^-}x(t)=\infty$, so we now proceed to show that \eqref{eq.blow_up_rate_preserved} holds. Since $h$ is continuous, there exists $a_1>0$ such that
		$
		|h(t)| \leq a_1 \mbox{ for all } t \in [0,T].
		$
		Following the line of argument from the proof of Theorem \ref{thm.expl_rate} yields the upper estimate
		\[
		x'(t) \leq a_1 + \bar{w}(\delta)\int_{t-\delta}^t f(x(s))\,ds + \int_0^{t-\delta} w(t-s)f(x(s))\,ds, \quad t \in (\delta,T),\quad \mbox{for each }\delta \in (0,T),
		\]
		with $\bar{w}(\delta) = \sup_{u \in [0,\delta]}w(u)$. Now let 
		$
		a_2(\delta) = \sup_{t \in [\delta,T]}\int_0^{t-\delta} w(t-s)f(x(s))\,ds
		$
		and note that $a_2(\delta)$ is bounded for each $\delta \in (0,T)$. Thus
		\begin{align}\label{est.upper_pert}
		x'(t) < a_3(\delta) +  \bar{w}(\delta)\int_{t-\delta}^t f(x(s))\,ds, \quad t \in (\delta,T),
		\end{align}
		where $a_3(\delta) = 1 + a_1 + a_2(\delta)$. Reuse the arguments from the proof of Theorem \ref{thm.expl_rate} to obtain
		\begin{align}\label{est.lower_pert}
		x'(t) > -a_1 + \underbar w(\delta) \int_{t-\delta}^t f(x(s))\, ds, \quad t \in (\delta,T),
		\end{align}
		where $\underbar w(\delta) = \inf_{u \in [0,\delta]}w(u)$. Define $I(t) = \int_0^t f(x(s))\,ds$ for $t \in [0,T)$. $I$ is increasing due to the positivity of $f \circ x$ and hence $\lim_{t\to T^-}I(t)$ exists. Suppose $\lim_{t\to T^-}I(t) = I^* \in (0,\infty)$. By positivity, $\int_{t-\delta}^t f(x(s))\,ds < I(t) \leq I^*$ for $t \in (\delta,T)$ and thus
		\[
		-a_1 < x'(t) < a_3(\delta) + \bar{w}(\delta)I^*, \quad t \in (\delta,T).
		\] 
		Integration of the inequalities above rules out the finite--time explosion of $x$, a contradiction. Therefore $\lim_{t\to T^-}I(t) = \infty$ and it follows from L'H\^{o}pital's rule that
		\[
		\lim_{t\to T^-}\frac{\int_{t-\delta}^t f(x(s))\,ds}{\int_0^t f(x(s))\,ds} = 1.
		\]
		Combining the limit above with \eqref{est.upper_pert} and \eqref{est.lower_pert} quickly establishes that
		\[
		1 \leq \liminf_{t\to\infty}\frac{x'(t)}{\underbar w(\delta)\int_0^t f(x(s))\,ds}, \quad \limsup_{t\to T^-}\frac{x'(t)}{\bar{w}(\delta)\int_0^t f(x(s))\,ds} \leq 1.
		\]
		Next let $\delta \to 0^+$ in the inequalities above to show that
		\[
		\lim_{t\to\infty}\frac{x'(t)}{w(0)\int_0^t f(x(s))\,ds} = 1.
		\]
		We are now in the same position as at equation \eqref{limit.first_ode} in the proof of Theorem \ref{thm.expl_rate}. Repeat verbatim the arguments which follow equation \eqref{limit.first_ode} to complete the proof.
	\end{proof}
	\noindent We state without proof several elementary lemmas required for the proof of Theorem \ref{thm.non_expl_rate_perturbed}.
	\begin{lemma}\label{lemma.ode_auxiliary}
		Suppose \eqref{eq.f2_ch6} and \eqref{condition.infinite} hold. Each solution, $y$, to the initial value problem
		\begin{equation}\label{eq.aux_ivp}
		y'(t) = \sqrt{\bar{F}(y(t))}, \quad t \geq 0; \quad y(0) = 1,
		\end{equation}
		obeys
		$
		\lim_{t\to\infty}y((1-\eta)t)/y(t) = 0 \mbox{ for each }\eta \in (0,1).
		$
	\end{lemma}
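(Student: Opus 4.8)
The plan is to exploit the separable structure of \eqref{eq.aux_ivp} together with the superlinearity $f(x)/x\to\infty$, which will force $y$ to grow faster than any exponential and hence to have the required rapid--variation property $y((1-\eta)t)/y(t)\to 0$.

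First I would record the basic qualitative behaviour of $y$. Since $f$ is positive, continuous and increasing, $\bar{F}$ given by \eqref{def.bigF} is $C^1$, strictly increasing and positive on $(0,\infty)$; as $y(0)=1$ and $y'(t)=\sqrt{\bar{F}(y(t))}>0$, the solution is strictly increasing with $y(t)\geq 1$ for all $t\geq 0$. Separating variables gives $F_U(y(t))=t$ for all $t$ in the maximal interval of existence, where $F_U$ is defined by \eqref{defn.F_unbounded}; since condition \eqref{condition.infinite} is precisely the statement that $F_U(x)\to\infty$ as $x\to\infty$, it follows that $y$ is defined on all of $[0,\infty)$ and that $y(t)\to\infty$ as $t\to\infty$.

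The key estimate is that $g(x):=\sqrt{\bar{F}(x)}/x$ satisfies $g(x)\to\infty$ as $x\to\infty$. This is immediate from \eqref{eq.f2_ch6}: since $f$ is increasing,
\[
\bar{F}(x)=\int_0^x f(s)\,ds \geq \int_{x/2}^x f(s)\,ds \geq \frac{x}{2} f\!\left(\frac{x}{2}\right), \qquad x>0,
\]
so $\bar{F}(x)/x^2 \geq \tfrac14\, f(x/2)/(x/2) \to \infty$. Then I would conclude as follows: put $L(t)=\log y(t)$, so that $L'(t)=y'(t)/y(t)=g(y(t))$. Fix $\eta\in(0,1)$ and $M>0$. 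Because $y$ is increasing with $y(t)\to\infty$ and $g(x)\to\infty$, there is $\tau_M>0$ with $g(y(s))>M$ for all $s>\tau_M$; hence, for $t>\tau_M/(1-\eta)$, every $s\in[(1-\eta)t,t]$ satisfies $s>\tau_M$, and using $y(s)\geq y((1-\eta)t)$ on this interval we obtain
\[
L(t)-L((1-\eta)t)=\int_{(1-\eta)t}^{t} g(y(s))\,ds \geq M\eta\, t .
\]
Therefore $0<y((1-\eta)t)/y(t)=\exp\big(L((1-\eta)t)-L(t)\big)\leq e^{-M\eta t}\to 0$ as $t\to\infty$ (taking $M=1$ already suffices), which is the assertion of the lemma.

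The argument is short, and the step I expect to require the most care is the uniform lower bound $g(y(s))>M$ over the window $s\in[(1-\eta)t,t]$: although this window has length growing like $\eta t$, monotonicity of $y$ lets one replace $y(s)$ by $y((1-\eta)t)$, which still tends to infinity with $t$, so the bound is available for all large $t$. The other ingredient, $\bar{F}(x)/x^2\to\infty$, is elementary but is precisely the point where the superlinearity hypothesis in \eqref{eq.f2_ch6} enters.
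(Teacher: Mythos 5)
Your proof is correct. The paper states this lemma without proof (it is listed among the ``elementary lemmas'' stated without proof), so there is no argument of the authors to compare against; your write-up supplies exactly the natural one. The two pillars are sound: separation of variables gives $F_U(y(t))=t$, so \eqref{condition.infinite} yields global existence and $y(t)=F_U^{-1}(t)\to\infty$, consistent with the identity the paper uses later in the proof of Theorem \ref{thm.non_expl_rate_perturbed}; and monotonicity of $f$ gives $\bar{F}(x)\ge \tfrac{x}{2}f(x/2)$, hence $y'(t)/y(t)=\sqrt{\bar{F}(y(t))}/y(t)\to\infty$, so integrating $\log y$ over the window $[(1-\eta)t,t]$, whose length grows like $\eta t$, produces the bound $y((1-\eta)t)/y(t)\le e^{-M\eta t}$ for every $M$ and all large $t$. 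Note that this proportional-shift decay is genuinely stronger than the fixed-shift superexponential property of Definition \ref{defn.super_exp}, which is why the linear-in-$t$ lower bound on $L(t)-L((1-\eta)t)$ is the right quantity to control; your handling of the uniform lower bound $g(y(s))>M$ on the window via $y$ increasing to infinity is exactly the point that needed care, and it is handled correctly.
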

	\begin{lemma}\label{lemma.f_preserves_zero}
		Suppose $a$ and $b$ are continuous functions from $\mathbb{R}^+$ to $\mathbb{R}^+/\{0\}$ which obey $a(t)/b(t) \to 0$ as $t \to \infty$. If $f:\mathbb{R}^+ \mapsto (0,\infty)$ is a continuous function such that $x \mapsto f(x)/x$ is increasing for $x > 0$, then
		$
		\lim_{t\to\infty}f(a(t))/f(b(t)) = 0.
		$	
	\end{lemma}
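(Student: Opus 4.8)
The plan is to reduce the claim to a single application of the monotonicity of $g(x) := f(x)/x$ together with the elementary identity $f(x) = x\,g(x)$.

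First I would set $g(x) = f(x)/x$ for $x>0$; by hypothesis $g$ is a positive function on $(0,\infty)$ which is increasing. Since $a(t)/b(t) \to 0$ as $t\to\infty$ and both $a$ and $b$ take values in $(0,\infty)$, there is a $T>0$ such that $0 < a(t) < b(t)$ for all $t \ge T$. Monotonicity of $g$ then gives $g(a(t)) \le g(b(t))$, i.e.\ $g(a(t))/g(b(t)) \le 1$, for every $t \ge T$.

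Next, for $t \ge T$ I would simply write
\[
\frac{f(a(t))}{f(b(t))} = \frac{a(t)\,g(a(t))}{b(t)\,g(b(t))} = \frac{a(t)}{b(t)}\cdot\frac{g(a(t))}{g(b(t))} \le \frac{a(t)}{b(t)}.
\]
Since $f>0$, the left-hand side is positive, so $0 < f(a(t))/f(b(t)) \le a(t)/b(t)$ for all $t \ge T$. Letting $t\to\infty$ and invoking $a(t)/b(t)\to 0$, the squeeze theorem yields $\lim_{t\to\infty} f(a(t))/f(b(t)) = 0$, as required.

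There is essentially no obstacle here: the only point needing (minimal) care is that the monotonicity of $g$ must be used in the correct direction, which is why one first passes to times $t \ge T$ on which $a(t) < b(t)$ — the hypothesis $a(t)/b(t)\to 0$ guarantees such a $T$ exists. Note that neither $a(t)\to 0$ nor $b(t)\to\infty$ is needed, and the continuity of $a$, $b$, $f$ plays no role in the argument; these hypotheses are retained only because they hold automatically in the settings where the lemma is applied.
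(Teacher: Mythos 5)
Your proof is correct: writing $f(x)=x\,g(x)$ with $g(x)=f(x)/x$ increasing, noting $a(t)<b(t)$ eventually, and bounding $f(a(t))/f(b(t))\le a(t)/b(t)$ is exactly the elementary squeeze argument this lemma calls for. The paper states the lemma without proof (it is listed among the "elementary lemmas" whose proofs are omitted), and your argument is evidently the intended one, with the helpful extra observation that continuity of $a$, $b$, $f$ is not actually needed.
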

	\begin{proof}[Proof of Theorem \ref{thm.non_expl_rate_perturbed}]
		Firstly, we claim that \eqref{condition.infinite} implies $x \in C([0,\infty);(0,\infty))$ in the presence of perturbations obeying \eqref{eq.H}. Under our standing hypotheses, there exists a $T \in (0,\infty]$ such that $x \in C([0,T);(0,\infty))$. Suppose $T < \infty$ and estimate $x'$ as follows:
		\begin{align*}
		x'(t) < \bar{w}(\delta)\int_{t-\delta}^t f(x(s))\,ds + \bar{M}(\delta) + h(t),\quad t \in (\delta,T), \quad \delta \in (0,T), 
		\end{align*}
		where $\bar{w}(\delta)=\sup_{s \in [0,\delta]}w(s)$ and $\bar{M}(\delta) = \sup_{t \in [0,T]}\int_0^{t-\delta} w(t-s)f(x(s))\,ds$. Since $h$ is continuous there exists $a_1>0$ such that $|h(t)| \leq a_1$ for all $t \in [0,T]$ and hence
		\begin{align}\label{est.no_expl_improved_pert}
		x'(t) < \bar{w}(\delta)\int_{t-\delta}^t f(x(s))\,ds + \bar{M}(\delta) + a_1,\quad t \in (\delta,T).
		\end{align}
		Beginning at equation \eqref{est.no_expl_initial}, repeat the argument from the proof of necessity in Theorem \ref{thm.sufficient_blow_up} verbatim to conclude that $x \in C([0,\infty);(0,\infty))$. Furthermore, by \eqref{eq.H} and a straightforward comparison argument with an equation of the form \eqref{eq.bounded_delay}, $\lim_{t\to \infty}x(t)=\infty$.
		
		We now show that $(i.)$ implies $(ii.)$. Suppose that \[\limsup_{t\to\infty}\frac{F_U(H(t))}{t} = K \in \left(0,\sqrt{2w(0)}\right).\] For each $\delta>0$, there exists $T_1(\delta)>0$ such that 
		$
		H(t) < F_U^{-1}\left((1+\delta) K t \right) = \bar{H}(t) \mbox{ for } t \geq T_1(\delta).
		$
			In the case $K=0$, we have
			$
			H(t) < F_U^{-1}\left(\delta t \right) =: \bar{H}(t)$ for $t \geq T_1(\delta)
			$  
			(and for each $\delta > 0$) and the proof proceeds analogously.
In integral form \eqref{eq.volterra_forced} reads
		$
		x(t) = x(0) + H(t) + \int_0^t W(t-s) f(x(s))\,ds$ for $t \geq 0,
		$
		where $W(t) = \int_0^t w(u)\,du$. Hence
		\begin{align}\label{upper_est_pert_1}
		x(t) &\leq x(0) + \bar{H}(t) + \int_{T_1}^t W(t-s) f(x(s))\,ds 
		+ {\mathcal W}\int_0^{T_1} f(x(s))\,ds, 
		\quad t > T_1(\delta).
		\end{align}
		Let $x^* = x(0) + {\mathcal W}\int_0^{T_1} f(x(s))\,ds$ and define the upper comparison solution $x_+$ by
		\[
		x_+(t) = 1 + x^* + \bar{H}(t) + \int_{T_1}^t W(t-s) f(x_+(s))\,ds, \quad t \geq T_1(\delta).
		\]
		By construction, $x(t) < x_+(t)$ for $t \geq T_1(\delta)$. Under \eqref{eq.f2_ch6}, $\bar{H} \in C^1((T_1,\infty);(0,\infty))$ and thus $x_+ \in C^1((T_1,\infty);(0,\infty))$. Hence differentiation yields
		\[
		x_+ '(t)  = \bar{H}'(t) + \int_{T_1}^t w(t-s) f(x_+(s))\,ds, \quad t > T_1(\delta).
		\]
		Since $x_+$ is increasing,
		$
		x_+ '(t) > \underbar w(\delta)\int_{t-\delta}^t f(x_+(s))\,ds$ for $t \geq T_1(\delta) + \delta,
		$
		where $\underbar w(\delta) = \inf_{u \in [0,\delta]}w(u)$. The argument from Lemma \ref{lemma.bounded_delay_1} shows that
		$
		\lim_{t\to\infty}x_+(t-\delta)/x_+(t) = 0$ for each $\delta>0.
		$
		Proposition \ref{LEMMA.PRESERVES} applies to $f$ since $x \mapsto f(x)/x$ is increasing and thus
	$
		\lim_{t\to\infty}f(x_+(t-\delta))/f(x_+(t)) = 0\mbox{ for each }\delta>0.
	$ Now estimate as follows:
		\begin{align}\label{est.x_+_upper}
		x_+'(t) &\leq \bar{H}'(t) + {\mathcal W} f(x_+(t-\delta)) + \bar{w}(\delta) \int_{t-\delta}^t f(x_+(s))\,ds,\quad t \geq T_1(\delta)+\delta,
		\end{align}
		where $\bar{w}(\delta) = \sup_{u \in [0,\delta]}w(u)$. As noted in earlier arguments
		\[
		\frac{f(x_+(t-\delta))}{\int_{t-\delta}^t f(x_+(s))\,ds} < \frac{f(x_+(t-\delta))}{\int_{t-\delta/2}^t f(x_+(s))\,ds} \leq \frac{2f(x_+(t-\delta))}{\delta f(x_+(t-\delta/2))},
		\]	
		and hence $\lim_{t\to\infty}f(x_+(t-\delta))/\int_{t-\delta}^t f(x_+(s))\,ds = 0$ for each $\delta>0$. Thus, from \eqref{est.x_+_upper},
		\[
		x_+'(t) < \bar{H}'(t) + \bar{w}(\delta)(1+\delta) \int_{t-\delta}^t f(x_+(s))\,ds, \quad t \geq T_2(\delta) > T_1(\delta)+\delta,
		\]
		where $T_2(\delta)$ is chosen sufficiently large that
		$
		{\mathcal W}f(x_+(t-\delta))/\int_{t-\delta}^t f(x_+(s))\,ds < \delta \bar{w}(\delta)$ for $t \geq T_2(\delta).
		$
		The solution to the initial value problem
		\begin{align}\label{eq.ivp}
		y'(t) = \sqrt{\bar{F}(y(t))}, \quad t \geq 0; \quad y(0) = 1,
		\end{align}
		is given by $y(t) = F_U^{-1}(t)$ for $t \geq 0$. Furthermore,
		$
		y''(t) = \frac{1}{2}f(y(t))\mbox{ for } t > 0. 
		$
		Now express the derivatives of $\bar{H}$ in terms of $y$ as follows:
		\[
		\bar{H}'(t)  = K(1+\delta) y'(K(1+\delta)t), \quad \bar{H}''(t) = \frac{K^2 (1+\delta)^2}{2} f(y(K(1+\delta)t)), \quad t \geq T_1(\delta).
		\]
		For short we write $\bar{y}(t) = y(K(1+\delta)t)$ henceforth. Let $x_u$ be the solution to 
		\begin{align*}
		x_u'(t) &= \bar{H}'(t) + \bar{w}(\delta)(1+\delta) \int_{t-\delta}^t f(x_u(s))\,ds, \,\, t \geq T_2(\delta);\quad
		x_u(t) = 1 + x_+(T_2), \,\, t \in [0,T_2].
		\end{align*}
		By construction, $x(t) < x_+(t) < x_u(t)$ for $t \geq T_2(\delta)$. Differentiation yields
		\[
		x_u''(t) = \bar{H}''(t) + \bar{w}(\delta)(1+\delta)\left\{ f(x_u(t)) - f(x_u(t-\delta))  \right\}, \quad t > T_2(\delta).
		\]
		With the notation introduced above, we have
		\begin{align}\label{eq.x_u''_est}
		x_u''(t) &= \frac{K^2 (1+\delta)^2}{2} f(\bar{y}(t)) + \bar{w}(\delta)(1+\delta)f(x_u(t)) \left\{ 1 - \frac{f(x_u(t-\delta))}{f(x_u(t))} \right\}\\ &> \bar{w}(\delta)(1+\delta)f(x_u(t)) \left\{ 1 - \frac{f(x_u(t-\delta))}{f(x_u(t))} \right\}, \quad t > T_2(\delta).\nonumber
		\end{align}
		It is easily demonstrated that $\lim_{t\to\infty}x_u(t-\delta)/x_u(t) = 0$ and hence 
		$f(x_u(t-\delta))/f(x_u(t)) \to 0$ as $t\to\infty$, since $f$ obeys the hypotheses of  Proposition \ref{LEMMA.PRESERVES}. Thus
		\begin{equation}\label{eq.break_cases}
		\liminf_{t\to\infty}\frac{x_u''(t)}{f(x_u(t))} \geq \bar{w}(\delta)(1+\delta), \quad \mbox{for each }\delta > 0,
		\end{equation}
		and there exists $T_3(\delta)  > T_2(\delta)$ such that 
		\begin{equation}\label{eq.est_x_u}
		\frac{x_u''(t)}{f(x_u(t))} > \bar{w}(\delta)(1+\delta)(1-\delta/4)^{1/2}, \quad t \geq T_3(\delta).
		\end{equation}
		Let $C(\delta) = \bar{w}(\delta)(1+\delta)(1-\delta/4)^{1/2}$ and calculate as follows:
		\begin{align*}
		\frac{d}{dt}\left\{ \frac{(x_u'(t))^2}{2} - C(\delta)\bar{F}(x_u(t))  \right\} = \left\{ x_u''(t) - C(\delta) f(x_u(t)) \right\}x_u'(t) > 0, \quad t > T_3(\delta),
		\end{align*}
		using equation \eqref{eq.est_x_u}. Hence,
		$
		(x_u'(t))^2/2 \bar{F}(x_u(t)) > C(\delta)\mbox{ for all } t > T_3(\delta),
		$
		or equivalently
		\[
		\frac{x_u'(t)}{\sqrt{\bar{F}(x_u(t))}} > \sqrt{2C(\delta)}, \quad t > T_3(\delta).
		\]
		Asymptotic integration shows that $
		\liminf_{t\to\infty}F_U(x_u(t))/t \geq \sqrt{2 C(\delta)}
		$.
		Thus
		\begin{equation}\label{eq.lower_est_F_U}
		\frac{F_U(x_u(t))}{t} > \sqrt{2C(\delta)} \sqrt{(1-\delta/4)} = \sqrt{2 \bar{w}(\delta) (1+\delta)(1-\delta/4) }, \quad t> T_4(\delta),
		\end{equation}
		for some $T_4(\delta)> T_3(\delta)$. Therefore
		\[
		\frac{\bar{y}(t)}{x_u(t)} = \frac{F_U^{-1}(K(1+\delta)t) }{ x_u(t) } < \frac{F_U^{-1}(K(1+\delta)t) }{ F_U^{-1}\left( \sqrt{2\bar{w}(\delta)(1+\delta)(1-\delta/4) }\, t \right) }, \quad t \geq T_4(\delta).
		\]
		We require $\delta \in (0,1)$ small enough that
		$
		K(1+\delta) < \sqrt{2\bar{w}(\delta)(1+\delta)(1-\delta/4)}.
		$
		Since $K < \sqrt{2w(0)}$ it is sufficient to choose 
		$
		\delta < \min\left\{(\alpha-1)/(1+\alpha/4),\,1\right\}$, where $\alpha = 2w(0)/K^2>1.
		$
		Hence, by Lemma \ref{lemma.ode_auxiliary}, $\lim_{t\to\infty}\bar{y}(t)/x_u(t) = 0$ for each $\delta > 0$ sufficiently small. By Lemma \ref{lemma.f_preserves_zero}, $\lim_{t\to\infty}f(\bar{y}(t))/f(x_u(t)) = 0$ and combining this limit with \eqref{eq.x_u''_est} shows that 
		$
		\lim_{t\to \infty}x_u''(t)/f(x_u(t)) = \bar{w}(\delta)(1+\delta).
		$
		Thus there exists $T_5(\delta) > T_4(\delta)$ such that 
		\begin{equation}\label{eq.est.final_x_u}
		\bar{w}(\delta) < \frac{x_u''(t)}{f(x_u(t))} < \bar{w}(\delta)(1+\delta)^2, \quad t \geq T_5(\delta).
		\end{equation}
		For each $t \geq T_5(\delta)$, by \eqref{eq.est.final_x_u},
		\[
		\frac{d}{dt}\left\{ \frac{(x_u'(t))^2}{2} - \bar{w}(\delta)(1+\delta)^2 \bar{F}(x_u(t)) \right\}  = \left\{ x_u''(t) - \bar{w}(\delta)(1+\delta)^2 f(x_u(t)) \right\}x_u'(t)< 0.
		\]
		Hence
		$
		x_u'(t) < \sqrt{2\bar{w}(\delta)(1+\delta)^2 \bar{F}_U(x_u(t))} \mbox{ for all } t \geq T_5(\delta).
		$
		Asymptotic integration now readily establishes that
		\[
		\limsup_{t\to\infty}\frac{F_U(x_u(t))}{t} \leq \sqrt{2\bar{w}(\delta)(1+\delta)^2}.
		\] 
		Note that $x(t) < x_u(t)$ for each $t \geq T_2(\delta)$. Therefore, letting $\delta \to 0^+$, we have
		\[
		\limsup_{t\to\infty}\frac{F_U(x(t))}{t} \leq \sqrt{2w(0)}.
		\]
		
		When $K = \sqrt{2w(0)}$, define $\bar{H}(t) = F_U^{-1}(K(1+\delta/2)t)$ for $t\geq T_1(\delta)$. The argument proceeds analogously until equation \eqref{eq.break_cases}. Next use asymptotic integration to show that
		\[
		\liminf_{t\to\infty}\frac{F_U(x_u(t))}{t} \geq \sqrt{2w(0)(1+2\delta)} \quad \mbox{for each }\delta>0.
		\] 
		At this point we must show that for $\delta>0$ sufficiently small 
		\[
		\limsup_{t\to\infty}\frac{\bar{y}(t)}{x_u(t)}\leq \limsup_{t\to\infty}\frac{F_U^{-1}\left(\sqrt{2w(0)}(1+\delta/2)t \right)}{F_U^{-1}\left( \sqrt{2w(0)(1+2\delta)(1-\delta/4)} \right)} = 0
		\]
		by applying Lemma \ref{lemma.ode_auxiliary}. Thus it is sufficient to choose $\delta>0$ such that 
		\[
		\sqrt{2w(0)}(1+\delta/2) < \sqrt{2w(0)(1+2\delta)(1-\delta/4)};
		\]
		this is equivalent to choosing $\delta \in (0,1)$. The proof concludes as in the case $K \in \left(0,\sqrt{2w(0)}\right)$.
		
		Now prove the corresponding limit inferior. Positivity of $H$ yields the trivial lower bound
		$
		x(t) \geq x(0) + \int_0^t W(t-s)f(x(s))\,ds$ for $t \geq 0,
		$
		where $W(t) = \int_0^t w(s) \,ds$. Define $x_-$ by
		\[
		x_-(t) = x(0)/2 + \int_0^t W(t-s)f(x_-(s))\,ds, \quad t \geq 0.
		\]
		By construction, $x_-(t) \leq x(t)$ for $t \geq 0$. Furthermore,
		$
		x_-'(t) = \int_0^t w(t-s)f(x_-(s))\, ds$ for $t \geq 0.
		$
		Apply Theorem \ref{thm.non_expl_rate} to $x_-$ to show that
		$
		\lim_{t\to\infty}F_U(x_-(t))/t = \sqrt{2 w(0)}.
		$
		Therefore
		\[
		\liminf_{t\to\infty}\frac{F_U(x(t))}{t} \geq \lim_{t\to\infty}\frac{F_U(x_-(t))}{t} = \sqrt{2 w(0)},
		\]
		and combining this with the corresponding limit superior establishes that
		\[
		\lim_{t\to\infty}\frac{F_U(x(t))}{t} = \sqrt{2 w(0)}.
		\]		
		Finally, show that $(ii.)$ implies $(i.)$. By positivity, $x(t) > H(t)$ for each $t\geq 0$. Thus, owing to the monotonicity of $F_U$, $F_U(x(t)) \geq F_U(H(t))$ for each $t \geq 0$. Therefore
		\[
		\limsup_{t\to\infty}\frac{F_U(H(t))}{t} \leq \limsup_{t\to\infty}\frac{F_U(x(t))}{t} = \sqrt{2 w(0)},
		\] 
		as required.
	\end{proof}
	\bibliographystyle{abbrv}
	\bibliography{blow_up_refs}

\begin{thebibliography}{10}

\bibitem{appleby2017growth}
J.~A.~D. Appleby and D.~D. Patterson.
\newblock Growth rates of solutions of superlinear ordinary differential
  equations.
\newblock {\em Appl. Math. Lett.}, 71:30--37, 2017.

\bibitem{brunner2012blow}
H.~Brunner and Z.~Yang.
\newblock Blow-up behavior of {H}ammerstein--type {V}olterra integral
  equations.
\newblock {\em J. Integral Equations Appl.}, 24(4):487, 2012.

\bibitem{evtukhov2011asymptotic}
V.~Evtukhov and A.~Samoilenko.
\newblock Asymptotic representations of solutions of nonautonomous ordinary
  differential equations with regularly varying nonlinearities.
\newblock {\em Differ. Equ.}, 47(5):627--649, 2011.

\bibitem{fu2003global}
L.~Fu-Cai, H.~Shu-Xiang, and X.~Chun-Hong.
\newblock Global existence and blow--up of solutions to a nonlocal
  reaction--diffusion system.
\newblock {\em Discrete Contin. Dyn. Syst.}, 9(6):1519--1532, 2003.

\bibitem{GLS}
G.~Gripenberg, S.-O. Londen, and O.~Staffans.
\newblock {\em Volterra Integral and Functional Equations}, volume~34.
\newblock Cambridge University Press, 1990.

\bibitem{kirk2002blow}
C.~Kirk and W.~Olmstead.
\newblock Blow-up in a reactive-diffusive medium with a moving heat source.
\newblock {\em Z. Angew. Math. Phys.}, 53(1):147--159, 2002.

\bibitem{kirk2013system}
C.~Kirk, W.~Olmstead, C.~Roberts, et~al.
\newblock A system of nonlinear {V}olterra equations with blow-up solutions.
\newblock {\em J. Integral Equations Appl.}, 25(3):377--393, 2013.

\bibitem{ma2011blow}
J.~Ma.
\newblock Blow-up solutions of nonlinear {V}olterra integro-differential
  equations.
\newblock {\em Math. Comput. Model.}, 54(11):2551--2559, 2011.

\bibitem{mahmoudi_2017}
N.~Mahmoudi.
\newblock Single--point blow--up for a multi--component reaction--diffusion
  system.
\newblock {\em Discrete Contin. Dyn. Syst.}, 38(1):209--230, 2017.

\bibitem{malolepszy2014blow}
T.~Ma{\l}olepszy.
\newblock Blow-up solutions in one-dimensional diffusion models.
\newblock {\em Nonlinear Anal.}, 95:632--638, 2014.

\bibitem{malolepszy2008blow}
T.~Ma{\l}olepszy and W.~Okrasi{\'n}ski.
\newblock Blow-up conditions for nonlinear {V}olterra integral equations with
  power nonlinearity.
\newblock {\em Appl. Math. Lett.}, 21(3):307--312, 2008.

\bibitem{malolepszy2010blow}
T.~Ma{\l}olepszy and W.~Okrasi{\~n}ski.
\newblock Blow-up time for solutions to some nonlinear {V}olterra integral
  equations.
\newblock {\em J. Math. Anal. Appl.}, 366(1):372--384, 2010.

\bibitem{MYDLARCZYK1994248}
W.~Mydlarczyk.
\newblock A condition for finite blow-up time for a {V}olterra integral
  equation.
\newblock {\em J. Math. Anal. Appl.}, 181(1):248 -- 253, 1994.

\bibitem{mydlarczyk1999blow}
W.~Mydlarczyk.
\newblock The blow-up solutions of integral equations.
\newblock {\em Colloq. Math.}, 79(1):147--156, 1999.

\bibitem{mydlarczyk2005blow}
W.~Mydlarczyk, W.~Okrasi{\'n}ski, and C.~Roberts.
\newblock Blow-up solutions to a system of nonlinear {V}olterra equations.
\newblock {\em J. Math. Anal. Appl.}, 301(1):208--218, 2005.

\bibitem{olmstead1996explosion}
W.~Olmstead and C.~A. Roberts.
\newblock Explosion in a diffusive strip due to a source with local and
  nonlocal features.
\newblock {\em Methods Appl. Anal.}, 3:345--357, 1996.

\bibitem{roberts1997characterizing}
C.~A. Roberts.
\newblock Characterizing the blow-up solutions for nonlinear {V}olterra
  integral equations.
\newblock {\em Nonlinear Anal.}, 30(2):923--933, 1997.

\bibitem{roberts1998analysis}
C.~A. Roberts.
\newblock Analysis of explosion for nonlinear {V}olterra equations.
\newblock {\em J. Comput. Appl. Math.}, 97(1-2):153--166, 1998.

\bibitem{roberts2007recent}
C.~A. Roberts.
\newblock Recent results on blow-up and quenching for nonlinear {V}olterra
  equations.
\newblock {\em J. Comput. Appl. Math.}, 205(2):736--743, 2007.

\bibitem{roberts1996growth}
C.~A. Roberts and W.~Olmstead.
\newblock Growth rates for blow-up solutions of nonlinear {V}olterra equations.
\newblock {\em Quart. Appl. Math.}, 54(1):153--159, 1996.

\end{thebibliography}
\end{document}